\newtheorem{theorem}{Theorem}[section]
\newtheorem{lemma}[theorem]{Lemma}
\newtheorem{proposition}[theorem]{Proposition}
\newtheorem{corollary}[theorem]{Corollary}
\newtheorem{prop-and-def}[theorem]{Proposition and Definition}
\theoremstyle{definition}
\newtheorem{definition}[theorem]{Definition}
\newtheorem{notation}[theorem]{Notation}
\newtheorem{remark}[theorem]{Remark}
\newtheorem{example}[theorem]{Example}
\newtheorem{Def-and-remark}[theorem]{Definition and Remark}
\newtheorem{def-and-remark}[theorem]{Notation and Remark}
\newtheorem{remark-and-def}[theorem]{Remark and Notation}
\newtheorem{ad-hoc-item}[theorem]{ }
\newcommand{\cA}{ {\cal A} }
\newcommand{\cP}{ {\cal P} }
\newcommand{\cPchi}{ \cP^{ ( \chi )} }
\newcommand{\cPchiopp}{ \cP^{ ( \chiopp )} }
\newcommand{\cT}{ {\cal T}_d }
\newcommand{\cX}{ {\cal X} }
\newcommand{\bC}{ {\mathbb C} }
\newcommand{\bN}{ {\mathbb N} }
\newcommand{\bZ}{ {\mathbb Z} }
\newcommand{\phivac}{ \varphi_{\mathrm{vac}} }
\newcommand{\xivac}{ \xi_{\mathrm{vac}} }
\newcommand{\piopp}{ \pi_{\mathrm{opp}} }
\newcommand{\chiopp}{  \chi_{ { }_{\mathrm{opp}} }  }
\newcommand{\omegaopp}{  \omega_{ { }_{\mathrm{opp}} }  }
\newcommand{\Luk}{ {\mathrm{Luk}} }
\newcommand{\term}{ {\mathrm{term}} }
\newcommand{\xtau}{\sigma}
\newcommand{\cstand}{ \rho_{ { }_{\lambda , \chi} } }
\newcommand{\leftstand}{ \rho_{ { }_{\lambda , \chi ; \ell} } }
\newcommand{\rightstand}{ \rho_{ { }_{\lambda , \chi ;r} } }
\newcommand{\arrowA}{ \stackrel{(1)}{\longrightarrow} }
\newcommand{\arrowB}{ \stackrel{(2)}{\longrightarrow} }
\newcommand{\arrowC}{ \stackrel{(3)}{\longrightarrow} }
\newcommand{\baone}{ \textcircled{1} }
\newcommand{\batwo}{ \textcircled{2} }
\newcommand{\bathr}{ \textcircled{3} }
\newcommand{\bafou}{ \textcircled{4} }
\newcommand{\bafiv}{ \textcircled{5} }
\newcommand{\basix}{ \textcircled{6} }
\newcommand{\basev}{ \textcircled{7} }
\newcommand{\baeig}{ \textcircled{8} }
\newcommand{\banin}{ \textcircled{9} }
\newcommand{\baa}{ \textcircled{a} }
\newcommand{\bab}{ \textcircled{b} }
\newcommand{\bac}{ \textcircled{c} }
\newcommand{\ban}{ \textcircled{n} }
\newcommand{\banminus}{ \textcircled{n'} }
\newcommand{\banminustwo}{ \textcircled{n''} }
\newcommand{\bass}{ \textcircled{s} }
\newcommand{\bax}{ \textcircled{x} }
\newcommand{\bayy}{ \textcircled{y} }
\begin{document}

$\ $

\begin{center}
{\bf\Large Double-ended queues and joint moments of}

\vspace{6pt}

{\bf\Large left-right canonical operators on full Fock space}

\vspace{20pt}

{\large Mitja Mastnak ${ }^{1}$\hspace{2cm}
Alexandru Nica \footnote{Research supported by a Discovery Grant
from NSERC, Canada.} }

\vspace{10pt}

\end{center}

\begin{abstract}

\noindent
We follow the guiding line offered by canonical operators on the 
full Fock space, in order to identify what kind of cumulant 
functionals should be considered for the concept of bi-free 
independence introduced in the recent work of Voiculescu.  By 
following this guiding line we arrive to consider, for a general 
noncommutative probability space $( \cA , \varphi )$, a family 
of ``$( \ell , r )$-cumulant functionals'' which enlarges the 
family of free cumulant functionals of the space.  In the 
motivating case of canonical operators on the full Fock space 
we find a simple formula for a relevant family of
$( \ell , r )$-cumulants of a $(2d)$-tuple
$(A_1, \ldots , A_d, B_1, \ldots , B_d)$, with
$A_1, \ldots , A_d$ canonical operators on the left 
and $B_1, \ldots , B_d$ canonical operators on the right.  This 
extends a known one-sided formula for free cumulants of 
$A_1, \ldots , A_d$, which establishes a basic operator model 
for the $R$-transform of free probability.
\end{abstract}

$\ $

{\em Keywords:} bi-free probability, canonical operators, 
double-ended queues,

\hspace{1.8cm}
bi-non-crossing partitions, bi-free cumulants.

Mathematics Subject Classification 2000: Primary 46L54;
Secondary 68R05.

\vspace{.5cm}

\begin{center}
{\bf\large 1. Introduction}
\end{center}
\setcounter{section}{1}

In this paper 
\footnote{This is the electronic version of an article published
in International Journal of Mathematics 26 (2015), Issue 02,
paper 1550016.
DOI: 10.1142/S0129167X15500160,
\copyright World Scientific Publishing Company.}
we follow the guiding line offered by a special 
type of canonical operators on the full Fock space, in order
to identify what kind of cumulant functionals should 
be considered for the concept of {\em bi-free independence}
introduced by D. Voiculescu \cite{V2013a}, \cite{V2013b}.

In Section 1.1 we give a general (rather informal) description of 
what is the above mentioned ``guiding line''; the main point of
the description is that we are upgrading from (I) to (II) in the 
diagrams displayed on the next page.

$\ $

{\bf 1.1 From (bi-)free independence to partitions, via 
canonical operators.}

The concept of free independence for noncommutative random 
variables was pinned down by D. Voiculescu in the 1980's 
(\cite{V1985}, \cite{V1986}), with inspiration from natural 
families of generators for algebras associated to free 
groups.  An important further idea brought by R. Speicher
\cite{S1994} was that calculations with freely independent 
random variables can be efficiently done by using the lattices 
$NC(n)$ of non-crossing partitions, and free cumulant functionals 
based on these lattices.  The main point of the cumulant approach 
is that free independence is equivalent to a condition (many times
easier to verify than the original definition) of ``vanishing of
mixed free cumulants'' for the random variables in question.

What is the guiding line which allows one to start from 
the original definition of free independence, based on free groups, 
and ``find'' the $NC(n)$'s?  Here we pursue (with due acknowledgement 
that there is more than one answer to the question just asked) the 
line provided by a special type of ``canonical'' operators on the 
full Fock space $\cT$ over $\bC^d$.  These are certain power series 
in creation/annihilation operators on $\cT$, which were invented in 
\cite{V1986} for $d=1$, then extended in \cite{N1996} to general $d$.
The fact that they play a role in free probability is not so
surprising, since $\cT$ itself is in a certain sense an incarnation 
of the free monoid on $d$ generators.  But now, the way how products 
of canonical operators act on the vacuum-vector of $\cT$ is encoded 
by the action of inserting and removing objects in a well-known 
gadget from theoretical computer science, called {\em lifo-stack} 
(lifo = abbreviation for ``last-in-first-out'').  Lifo-stacks are 
closely connected (through the intermediate of some lattice paths 
called Lukasiewicz paths) to non-crossing partitions, so overall 
we get a diagram like this:
\[
\mbox{(I)} \hspace{0.5cm}
\left(  \begin{array}{c}
        \mbox{free independence for}             \\
        \mbox{$d$ random variables}
        \end{array}  \right) \ \longrightarrow \ 
\left(  \begin{array}{c}
        \mbox{$d$-tuple of canonical}            \\
        \mbox{operators on full Fock space}
        \end{array}  \right) 
\]
\[
\mbox{ $\ $ \hspace{3cm} $\ $} \longrightarrow \
\left(  \begin{array}{c}
        \mbox{lifo-}        \\
        \mbox{stacks}
        \end{array}  \right) \ \longrightarrow \ 
\left(  \begin{array}{c}
        \mbox{non-crossing}    \\
        \mbox{partitions}
        \end{array}  \right) .
\]

In 2013, Voiculescu \cite{V2013a}, \cite{V2013b} started to
study the concept of {\em bi-free independence} for $d$ 
{\em pairs} of random variables in a noncommutative probability 
space.  This too is a concept inspired from looking at algebras 
associated to free groups, but its definition is in some sense 
indirect -- it ultimately boils down to a special way of 
representing the $2d$ random variables in question on a 
free-product space.  In order to advance the study of bi-free 
independence, it is of obvious importance (even more so than 
it was the case for plain free independence) to be able to 
re-phrase it as a vanishing condition on mixed cumulants, 
for a suitable construction of cumulant functionals.  The 
candidate for how to construct such ``bi-free cumulant functionals'' 
will come out of an upgrade of diagram (I), which tells us what 
lattices of partitions to use in the stead of $NC(n)$'s.  The 
upgrade is as follows:
\[
\mbox{(II)} \vspace{0.5cm}
\left(  \begin{array}{c}
        \mbox{bi-free independence for }                \\
        \mbox{$d$ pairs of random variables}
        \end{array}  \right) \ \arrowA \
\left(  \begin{array}{c}
        \mbox{$(2d)$-tuple of canonical}  \\
        \mbox{operators on full Fock space,} \\
        \mbox{$d$ on left and $d$ on right}
        \end{array}  \right) 
\]
\[
\mbox{$\ $ \hspace{2cm} $\ $} \arrowB \
\left(  \begin{array}{c}
        \mbox{double-ended}      \\
        \mbox{queues}
        \end{array}  \right) \ \arrowC \ 
\left(  \begin{array}{c}
        \mbox{bi-non-crossing}    \\
        \mbox{partitions}   
        \end{array}  \right) .
\]
(We have numbered the three arrows which appear in diagram (II),
in order to discuss them separately in the next subsection.)

The lattices of partitions obtained in (II) are indexed not 
only by a positive integer $n$, but also by a tuple 
$\chi = ( h_1, \ldots , h_n )$ where every $h_i$ is either 
the letter $\ell$ (for ``left'') or the letter $r$ (for 
``right'').  Throughout the paper, the notation used for 
such a lattice of partitions will be $\cPchi (n)$.  For 
every $n \in \bN$ and $\chi \in \{ \ell , r \}^n$ we have 
that $\cPchi (n)$ is a collection of partitions of
$\{ 1, \ldots , n \}$.  If $\chi = ( \ell, \ell , \ldots , \ell )$ 
or if $\chi = ( r,r, \ldots, r )$, then $\cPchi (n) = NC(n)$,
but for arbitrary $\chi \in \{ \ell , r \}^n$ we generally have 
$\cPchi (n) \neq NC(n)$.

$\ $

{\bf 1.2 Discussion of the three arrows in diagram (II).}

\vspace{6pt}

{\em Discussion of the connection ``$\arrowA$''.}
Let $d$ be a fixed positive integer, and let us also fix an 
orthonormal basis $e_1, \ldots , e_d$ for $\bC^d$.  Recall that 
the full Fock space over $\bC^d$ is
\begin{equation}   \label{eqn:12a}
\cT = \bC \oplus \bC^d \oplus ( \bC^d \otimes \bC^d )
\oplus \cdots \oplus ( \bC^d )^{\otimes n} \oplus \cdots
\end{equation}
In $\cT$ we have a preferred orthonormal basis, namely
\begin{equation}   \label{eqn:12b}
\{ \xivac \} \cup \{ e_{i_1} \otimes \cdots \otimes e_{i_n} \mid
n \geq 1, \, 1 \leq i_1, \ldots , i_n \leq d \} ,
\end{equation}
where $\xivac$ is a fixed unit vector in the first summand $\bC$
on the right-hand side of (\ref{eqn:12a}).  
For every $1 \leq i \leq d$ we denote by $L_i, R_i \in B( \cT )$ 
the left-creation and respectively the right-creation operator on 
$\cT$ defined by the vector $e_i$.  These are isometries which act 
on the preferred basis by $L_i ( \xivac ) = R_i ( \xivac ) = e_i$ 
and by
\[
L_i ( e_{i_1} \otimes \cdots \otimes e_{i_n} )
= e_i \otimes e_{i_1} \otimes \cdots \otimes e_{i_n}, \ \ 
R_i ( e_{i_1} \otimes \cdots \otimes e_{i_n} )
= e_{i_1} \otimes \cdots \otimes e_{i_n} \otimes e_i,
\]
for $n \geq 1$ and $1 \leq i_1, \ldots , i_n \leq d$.

Suppose we are given a polynomial $f$ without constant term 
in non-commuting indeterminates $z_1, \ldots , z_d$.  Write it 
explicitly as
\begin{equation}    \label{eqn:12c}
f(z_1, \ldots, z_d) 
= \sum_{n=1}^{\infty} \ \sum_{i_1, \ldots , i_n = 1}^d
\alpha_{( i_1, \ldots , i_n)} z_{i_1} \cdots z_{i_n},
\end{equation}
where the $\alpha$'s are in $\bC$ (and $\exists \, n_o \in \bN$ 
such that $\alpha_{( i_1, \ldots , i_n)} = 0$ for $n > n_o$).
For $1 \leq i \leq d$, let $A_i$ be the operator in $B ( \cT )$
defined as follows:
\begin{equation}    \label{eqn:12d}
A_i := L_i^{*} \Bigl( I 
+ \sum_{n=1}^{\infty} \ \sum_{i_1, \ldots , i_n = 1}^d
\alpha_{( i_1, \ldots , i_n)} L_{i_n} \cdots L_{i_1} \Bigr) .
\end{equation}
We will say that $(A_1, \ldots , A_d)$ is the
{\em $d$-tuple of left canonical operators} with symbol $f$.
The reason to care about this $d$-tuple is that it provides 
one of the possible approaches (historically the first) to 
the $R$-transform, a fundamental tool used in free probability.
More precisely: if we endow $B( \cT )$ with the vacuum-state 
$\phivac$ (i.e. with the vector-state defined by the vector
$\xivac$ from (\ref{eqn:12b})), then the $R$-transform of 
$(A_1, \ldots , A_d)$ with respect to $\phivac$ 
is 
\footnote{ So what we have here is a canonical construction which 
produces a $d$-tuple of operators with prescribed $R$-transform.
In this discussion $f$ could be a formal power 
series in $z_1, \ldots, z_d$, in which case $A_1, \ldots , A_d$
would live in a suitable algebra of formal operators on $\cT$, 
as described for instance on pp. 344-346 of \cite{NS2006}.  For 
the sake of not complicating the notations, we will stick to $f$ 
being a polynomial.}  
the 
given $f$.  For a detailed presentation of how this goes, see 
Lecture 21 of \cite{NS2006}.  Let's also note that, as explained 
in that lecture of \cite{NS2006} (Theorem 21.4 there), the 
derivation of the $R$-transform of $(A_1, \ldots , A_d )$ relies 
solely on the fact that $L_1, \ldots , L_d$ used in Equation 
(\ref{eqn:12d}) form a free family of Cuntz isometries.  The 
latter fact means, by definition, that the $L_i$'s are isometries 
with $L_i^{*} L_j = 0$ for $i \neq j$, and that one has
\[
\phivac (  L_{i_1} \cdots L_{i_m} 
           L_{j_1}^{*} \cdots L_{j_n}^{*}  ) = 0
\]
for all non-negative integers $m,n$ with $m+n \geq 1$ and all 
$i_1, \ldots , i_m, j_1, \ldots , j_n \in \{ 1, \ldots , d \}$.

Since the creation operators $R_1, \ldots , R_d$ on the right 
also form a free family of Cuntz isometries, it follows that
canonical $d$-tuples of operators could be equivalently constructed
by using creation and annihilation operators 
{\em on the right side} (instead of the left side favoured in
Equation (\ref{eqn:12d})).  For the present paper it is important 
to also write explicitly this second set of formulas.  So let $g$ 
be a polynomial without constant term in the same 
$z_1, \ldots , z_d$, and let us write it explicitly as
\begin{equation}    \label{eqn:12e}
g(z_1, \ldots, z_d) 
= \sum_{n=1}^{\infty} \ \sum_{i_1, \ldots , i_n = 1}^d
\beta_{( i_1, \ldots , i_n)} z_{i_1} \cdots z_{i_n}
\end{equation}
(with $\beta$'s in $\bC$, and where $\exists \, n_o$ such that
$\beta_{( i_1, \ldots , i_n)} = 0$ for $n > n_o$).
For $1 \leq i \leq d$, we put
\begin{equation}    \label{eqn:12f}
B_i := R_i^{*} \Bigl( I 
+ \sum_{n=1}^{\infty} \ \sum_{i_1, \ldots , i_n = 1}^d
\beta_{( i_1, \ldots , i_n)} R_{i_n} \cdots R_{i_1} \Bigr)
\in B( \cT ).
\end{equation}
Then $(B_1, \ldots , B_d)$ is called the
{\em $d$-tuple of right canonical operators} with symbol $g$,
and has the property that its $R$-transform with 
respect to $\phivac$ is the polynomial $g$ we started with.

If taken in isolation, the $B_i$'s from Equation (\ref{eqn:12f}) 
would merely duplicate what the $A_i$'s from (\ref{eqn:12d}) are
already doing.  What is interesting is to consider the 
{\em combined $(2d)$-tuple} of $A_i$'s and $B_i$'s -- this 
provides a significant example of $d$ pairs of left/right 
variables, as one wants to study in bi-free probability.

$\ $

{\em Discussion of the connection ``$\arrowB$''.}
We now examine the values of $\phivac$ on monomials made with 
the canonical operators $A_1, \ldots , A_d, B_1, \ldots , B_d$
that were considered above.  Let us first recap the one-sided 
case, where we look at an expectation
\[
\phivac (A_{j_1} \cdots A_{j_n})
= \langle A_{j_1} \cdots A_{j_n} \xivac \, , \, \xivac \rangle ,
\]
for some $n \geq 1$ and $1 \leq j_1, \ldots , j_n \leq d$.  If
we replace each of $A_{j_1},  \ldots , A_{j_n}$ by using 
(\ref{eqn:12d}), and if we expand the ensuing product of sums, 
then we arrive to act on $\xivac$ with products of the form
\begin{equation}    \label{eqn:12g}
L_{j_1}^{*} ( L_{i_{1,1}} \cdots L_{i_{1,m(1)}} ) \cdots
L_{j_n}^{*} ( L_{i_{n,1}} \cdots L_{i_{n,m(n)}} ),
\end{equation} 
where $m(1), \ldots , m(n) \geq 0$ are such that 
$m(1) + \cdots + m(n) = n$.
The action of such a product on $\xivac$ can be followed 
intuitively by thinking of how a collection of $n$ balls 
$\baone, \ldots , \ban$ moves through a lifo-stack --  the 
balls go into the stack 
in
\footnote{ The sizes of the groups of balls going in the stack
are $m(n), \ldots , m(2), m(1)$.  Here we ignore for the moment 
some relations that must also be imposed in between the indices 
$i_{1,1}, \ldots , i_{n,m(n)}$ and $j_1, \ldots , j_n$.}
groups 
(creation) and are taken out of the stack one by one (annihilation).  

What happens when we upgrade to a monomial which contains both some 
$A_i$'s and some $B_i$'s?  We can still proceed in the same way
as above, only that in (\ref{eqn:12g}) some of the $L_i$'s and 
$L_j^{*}$'s become $R_i$'s and $R_j^{*}$'s.  The intuition of $n$ 
balls moving through a device like a stack continues to work, but 
sometimes (when we have ``$R$'' instead of ``$L$'') the balls must 
go in or out ``by the other end of the stack''.  This is precisely 
the device which in theoretical computer science goes 
under the name of {\em double-ended queue}, or {\em deque} for 
short -- see e.g. Section 2.2 of \cite{K1973}.  Some pictures 
showing how we think about deques and how we use them in the 
present paper appear on pages 11-12 in Section 3.

$\ $

{\em Discussion of the connection ``$\arrowC$''.
The set of partitions $\cPchi (n)$.}
Once again, let us first recap the one-sided case, where we look at
$n$ balls moving through a lifo-stack.  Every possible scenario 
of how the balls move through the stack has associated to it a 
certain partition of $\{ 1, \ldots , n \}$, which we call 
``output-time partition'' (see Definition \ref{def-and-rem:3.3}, 
Example \ref{example:3.4}.2 below).  When we pursue the discussion 
started in (\ref{eqn:12g}), the concrete formula obtained for 
$\phivac (A_{j_1} \cdots A_{j_n})$ comes out as a sum indexed by 
all possible output-time partitions.  But the last-in-first-out 
rule of the stack forbids output-time partitions from having 
crossings!  What has come out is precisely a summation formula
over $NC(n)$ (as we knew it should).

What happens when we upgrade to the case of combined $A_i$'s and 
$B_i$'s?  We now have $n$ balls moving through a deque.  We still
have the concept of output-time partition associated to a scenario 
for how the balls move through the deque, but this partition may 
now have crossings, due to the interference between left and right 
moves (e.g. it is possible that some of the balls 
$\baone, \ldots , \ban$ enter the deque by one side and exit by 
the other).  

If we fix a tuple $\chi \in \{ \ell , r \}^n$, then the set 
$\cPchi (n)$ of bi-non-crossing partitions corresponding to $\chi$
will be defined in Section 3 of the paper as the set of all partitions 
of $\{ 1, \ldots , n \}$ which can arise as output-time partition 
for a deque-scenario compatible with $\chi$.  (See details in 
Definition \ref{def:3.5}.)  In the case when $\chi$ happens to be
either $( \ell , \ldots , \ell )$ or $( r, \ldots , r )$, then 
the deque is reduced to a lifo-stack, and $\cPchi (n)$ is equal 
to $NC(n)$.  For general $\chi \in \{ \ell, r \}^n$, it follows 
immediately from the definition that $\cPchi (n)$ has the same 
cardinality as $NC(n)$, but $\cPchi (n)$ is generally different 
from $NC(n)$ itself.

In Section 4 of the paper we will find (Theorem \ref{thm:4.9})
an alternative description for $\cPchi (n)$.  It says that
$\cPchi (n) = \{ \xtau_{ { }_{\chi} } \cdot \pi \mid 
\pi \in NC(n) \}$,
where $\xtau_{ { }_{\chi} }$ is a specific (concretely described)
permutation of $\{ 1, \ldots , n \}$ associated to $\chi$, and 
where the action of a permutation $\sigma$ on a partition $\pi$ is 
defined in the natural way (if $\pi = \{ V_1, \ldots , V_k \}$, then 
$\sigma \cdot \pi = \{ \sigma (V_1), \ldots , \sigma (V_k) \}$).  
This alternative description of $\cPchi (n)$ is very useful for 
concrete calculations.  It also gives immediately the fact that, 
with respect to reverse refinement order, $\cPchi (n)$ is a lattice 
isomorphic to $NC(n)$.

$\ $

{\bf 1.3 From partitions to cumulant functionals.}

% {\bf \boldmath{$( \ell, r )$}-cumulant functionals}
% Didactical note:  1.3 is a short subsection, and it is 
% natural to think about merging it with either 1.2 or 
% 1.4.  But this does not work, for the following reason: 
% both 1.2 and 1.4 go in the framework of the full Fock 
% space; it is important to make clear that the definition 
% of the $( \ell , r)$-cumulant functionals is not 
% restricted to that, but is rather a concept that makes
% sense in an arbitrary ncps.

In both classical and free probability theory,
the standard method to introduce cumulant functionals
goes by writing a so-called ``moment-cumulant'' formula,
where moments are expressed in terms of cumulants via 
summations over a suitable family of lattices of partitions.  
In particular, free cumulant functionals (as introduced by 
Speicher \cite{S1994}) have a moment-cumulant formula based 
on non-crossing partitions.  To be specific, let a 
noncommutative probability space $( \cA , \varphi )$ be 
given.  The free cumulant functionals of $( \cA ,\varphi )$ are
defined as the family of multilinear functionals
$( \, \kappa_n : \cA^n \to \bC \, )_{n=1}^{\infty}$ which 
is uniquely determined by the requirement that for every 
$n \geq 1$ and $a_1, \ldots , a_n \in \cA$ we have:
\[
\mbox{(M-FC)} \hspace{1cm}
\varphi (a_1 \cdots a_n ) =
\sum_{\pi \in NC (n)}  \, \Bigl( \
\prod_{V \in \pi}
\kappa_{|V|} 
( \, (a_1, \ldots , a_n) \mid V \, ) \ \Bigr).
\]

\vspace{6pt}

By the same token, the families of partitions $\cPchi (n)$ 
discussed at the end of section 1.2 can be used to define a 
concept of cumulant functionals, as follows.  Let a noncommutative
probability space $( \cA , \varphi )$ be given.  There exists a 
family of multilinear functionals
\[
\Bigl( \,  \kappa_{\chi} : \cA^n \to \bC \, \Bigr)_{ n \geq 1,
                                \,  \chi \in \{ \ell , r \}^n } 
\]
which is uniquely determined by the requirement that 
for every $n \geq 1$, $\chi \in \{ \ell , r \}^n$
and $a_1, \ldots , a_n \in \cA$ we have:
\[
\mbox{(M-F${ }_2$C)} \hspace{1cm}
\varphi (a_1 \cdots a_n ) =
\sum_{\pi \in \cPchi (n)}  \, \Bigl( \
\prod_{V \in \pi}
\kappa_{\chi \mid V} 
( \, (a_1, \ldots , a_n) \mid V \, ) \ \Bigr).
\]
The explanation of various notational details, the easy proof of 
existence/uniqueness, and a bit of further discussion around the 
functionals $\kappa_{\chi}$ are given in Section 5 of the paper.
We will refer 
to 
\footnote{ In view of the results obtained in \cite{CNS2014}, after 
the present paper was first circulated, it is justified to also 
refer to the $\kappa_{\chi}$'s as
``bi-free cumulant functionals'' associated to $( \cA , \varphi )$. }
these 
functionals as the {\em $( \ell , r )$-cumulant functionals} 
associated to the noncommutative probability space 
$( \cA , \varphi )$.  It is immediate that they provide an 
enlargement of the family of free cumulants 
$( \kappa_n )_{n=1}^{\infty}$ associated to the same space,
in the respect that
\[
\kappa_n 
= \kappa_{ ( \, \underbrace{\ell, \ldots , \ell}_n \, ) }
= \kappa_{ ( \, \underbrace{r, \ldots , r}_n \, ) },
\ \ n \geq 1.
\]

$\ $

{\bf 1.4 Back to canonical operators: main result of the paper.}

The concept of $( \ell ,r)$-cumulants was introduced in section 1.3 
for a general noncommutative probability space $( \cA, \varphi)$.  
Here we go back to the special space $( B( \cT ), \phivac )$ from
section 1.2.  Let $(A_1, \ldots A_d)$ and $(B_1, \ldots B_d)$ be 
$d$-tuples of left (respectively right) canonical operators with 
symbols $f$ and $g$, as in Equations (\ref{eqn:12d}) and 
(\ref{eqn:12f}).  We are interested in the $( \ell ,r)$-cumulants 
of the combined $(2d)$-tuple
$(A_1, \ldots , A_d, B_1, \ldots , B_d)$.  The point we want to 
establish is that a relevant family of such $( \ell,r)$-cumulants
simply consists of coefficients of either the polynomial $f$
(an $\alpha_{(i_1, \ldots ,i_n)}$ from Equation (\ref{eqn:12c}))
or the polynomial $g$ (a $\beta_{(i_1, \ldots ,i_n)}$ from 
Equation (\ref{eqn:12e})).  For the statement of our theorem it is
convenient to use a unified notation for $A_i$'s and $B_i$'s:
\[
A_i =: C_{i; \ell} \ \mbox{ and }
B_i =: C_{i; r}, \ \ \mbox{ for } 1 \leq i \leq d.
\]

$\ $

{\bf Theorem.} 
{\em Consider all the notations pertaining to 
$A_1, \ldots , A_d, B_1, \ldots , B_d$ that were introduced above.
Let $n$ be a positive integer and let 
$\chi = ( h_1, \ldots , h_n )$ be in $\{ \ell, r \}^n$. 
Let us record explicitly where are the occurrences of $\ell$ 
and of $r$ in $\chi$:
\[
\left\{   \begin{array}{ll}
\{ m \mid 1 \leq m \leq n, \, h_m = \ell \}
=: \{ m_{\ell} (1), \ldots , m_{\ell} (u) \} &  \mbox{with
                 $m_{\ell} (1) < \cdots < m_{\ell} (u)$,}       \\
                          &                                     \\
\{ m \mid 1 \leq m \leq n, \, h_m = r \}
=: \{ m_r (1), \ldots , m_r (v) \} &  \mbox{with
                              $m_r (1) < \cdots < m_r (v)$.}
\end{array}  \right.
\]
Then for every $i_1, \ldots , i_n \in \{ 1, \ldots , d \}$ 
we have
\begin{equation}   \label{eqn:14b}
\kappa_{\chi} ( C_{i_1; h_1}, \ldots , C_{i_n; h_n} ) = 
\left\{   \begin{array}{ll}
\alpha_{ ( i_{m_r (v)}, \ldots , i_{m_r (1)},
           i_{m_{\ell} (1)}, \ldots , i_{m_{\ell} (u)} ) }, 
         & \mbox{ if $h_n = \ell$, }                         \\
         &                                                   \\
\beta_{ ( i_{m_{\ell} (u)}, \ldots , i_{m_{\ell} (1)},
          i_{m_r (1)}, \ldots , i_{m_r (v)} ) },
         & \mbox{ if $h_n = r$. } 
\end{array}  \right.
\end{equation}
                                                            }
$\ $

{\bf Remark.}
The above theorem generalizes the result from 
\cite{N1996} which says that the canonical left $d$-tuple 
$(A_1, \ldots , A_d)$ has $R$-transform 
$R_{ (A_1, \ldots , A_d) } = f$.  Indeed, if the tuple $\chi$ 
from the theorem is set to be 
$( \ell, \ldots , \ell ) \in \{ \ell , r \}^n$, then 
Equation (\ref{eqn:14b}) says that
$\kappa_n ( A_{i_1}, \ldots , A_{i_n} ) 
= \alpha_{ (i_1, \ldots , i_n) }, \ \  \forall 
\, 1 \leq i_1, \ldots , i_n \leq d$.  So then
\begin{align*}
R_{ (A_1, \ldots , A_d) }  (z_1, \ldots , z_d)
& := \sum_{n=1}^{\infty} \, \sum_{i_1, \ldots , i_n =1}^d \,   
   \kappa_n ( A_{i_1}, \ldots , A_{i_n} ) z_{i_1} \cdots z_{i_n}  \\
& = \sum_{n=1}^{\infty} \, \sum_{i_1, \ldots , i_n =1}^d \,   
  \alpha_{ (i_1, \ldots , i_n) }  z_{i_1} \cdots z_{i_n}          \\
& = f(z_1, \ldots, z_d),
\end{align*}
as claimed.  (By setting $\chi = (r, \ldots ,r)$ we could,
of course, also infer from the above theorem that 
$R_{ (B_1, \ldots , B_d) } = g$.)

$\ $

{\bf Example.}
The upshot of the theorem is that every mixed moment of the 
$A_i$'s and $B_i$'s is written as a straight sum (no signs or 
coefficients!) indexed by $\cPchi (n)$, where every term of the 
sum is a product of $\alpha$'s and $\beta$'s.  For a concrete 
example, say that we are interested in the mixed moment 
$\phivac ( A_{i_1} B_{i_2} A_{i_3} B_{i_4} )$, for some given
$i_1, i_2, i_3, i_4 \in \{ 1, \ldots , d \}$.  In the above 
theorem we take $n=4$ and $\chi = ( \ell, r, \ell, r)$, and get:
\begin{align*}
\phivac ( A_{i_1} B_{i_2} A_{i_3} B_{i_4} ) 
& = \kappa_{( \ell , r, \ell , r)}
     ( A_{i_1}, B_{i_2}, A_{i_3}, B_{i_4} ) 
    + \kappa_{( \ell )} (A_{i_1})
      \, \kappa_{(r, \ell , r)} (B_{i_2}, A_{i_3}, B_{i_4})   \\
& \mbox{$\ $ \hspace{0.5cm} $\ $} + \cdots
    + \kappa_{\ell} (A_{i_1}) \, \kappa_r (B_{i_2}) \, 
      \kappa_{\ell} (A_{i_3}) \, \kappa_r (B_{i_4})          \\
& = \beta_{ (i_3, i_1, i_2, i_4) }                 
    + \alpha_{ (i_1) } \, \beta_{ (i_3, i_2, i_4) } + \cdots
    + \alpha_{ (i_1) } \, \beta_{ (i_2) } \, \alpha_{ (i_3) } 
                   \, \beta_{ (i_4) },
\end{align*}
a sum of $14$ terms corresponding to the $14$ partitions in
$\cP^{( \ell ,r, \ell , r)} (4)$.  (As is easily checked, 
$\cP^{( \ell ,r, \ell , r)} (4)$ is different from $NC(4)$ -- it 
contains the crossing partition
$\{ \, \{ 1,3 \}, \, \{ 2,4 \} \, \}$, and misses the 
non-crossing partition $\{ \, \{ 1,4 \}, \, \{ 2,3 \} \, \}$.)

Note that the nice feature of getting a summation formula without 
signs and coefficients is due precisely to the fact that we are 
using the tuple $\chi = ( \ell ,r, \ell, r)$ coming from the mixed 
moment that we want to calculate.  If we tried for instance to 
evaluate the same mixed moment via a sum indexed by 
$\cP^{( r,r,r,r)} (4) = NC(4)$ (which would just be the usual 
free cumulant expansion), then we would run from the very 
beginning into the term
\begin{align*}
\kappa_{(r,r,r,r)} ( A_{i_1}, B_{i_2}, A_{i_3}, B_{i_4} )
& = \kappa_4 ( A_{i_1}, B_{i_2}, A_{i_3}, B_{i_4} )           \\
& = \beta_{ (i_3, i_1, i_2, i_4) }
    + \alpha_{ (i_1, i_3) } \beta_{ (i_2, i_4) }
    - \alpha_{ (i_2, i_3) } \beta_{ (i_1, i_4) } ,
\end{align*}
and the nice structure in the formula for 
$\phivac ( A_{i_1} B_{i_2} A_{i_3} B_{i_4} )$ (a straight sum of 
products) would only emerge after going through some more 
complicated expressions, and doing cancellations between terms.

$\ $

{\bf 1.5 Vanishing mixed \boldmath{$( \ell , r )$}-cumulants, 
and a question.}

Suppose the polynomials $f$ and $g$ considered in section 1.2
are of the form
\begin{equation}   \label{eqn:15a}
\left\{   \begin{array}{l}
f( z_1, \ldots , z_d ) 
= f_1 (z_1) + \cdots + f_d (z_d) \ \ \mbox{ and }   \\
                                                    \\
g( z_1, \ldots , z_d ) 
= g_1 (z_1) + \cdots + g_d (z_d), 
\end{array}  \right.
\end{equation}
where $f_1, \ldots, f_d, g_1, \ldots , g_d$ are polynomials of 
one variable.  Then the formulas defining the canonical operators 
$A_1, \ldots , A_d, B_1, \ldots , B_d$ simplify to
\begin{equation}   \label{eqn:15b}
\left\{   \begin{array}{l}
A_i = L_i^{*} \, \bigl( I + f_i (L_i) \bigr),    \\
                                                 \\
B_i = R_i^{*} \, \bigl( I + g_i (R_i) \bigr), 
\end{array}  \right. 
\ \ 1 \leq i \leq d.
\end{equation}

The $d$ pairs of operators $(A_1, B_1), \ldots , (A_d, B_d)$ 
appearing in Equations (\ref{eqn:15b}) give an example of 
bi-free family of pairs of elements of a noncommutative 
probability space, in the sense of Voiculescu \cite{V2013a}.
On the other hand, in view of the formula for 
$( \ell , r )$-cumulants provided by Equation (\ref{eqn:14b}), 
the special case of $f,g$ considered in (\ref{eqn:15a}) can be 
equivalently described via the requirement that 
\[
\left\{   \begin{array}{l}
\kappa_{\chi} 
\bigl( C_{i_1; h_1}, \ldots , C_{i_n; h_n} \bigr) = 0   \\
                                                        \\
\mbox{ $\ $ whenever $\exists \, 1 \leq p < q \leq n$
       such that $i_p \neq i_q$. }
\end{array}  \right.
\]
This coincidence is in line with the fact that various forms 
of independence for noncommutative random variables which are 
considered in the literature have a combinatorial
incarnation expressed in terms of the vanishing of some mixed 
cumulants.  It is in fact tempting to make a definition and ask
a question, as follows.

$\ $

{\bf Definition.} Let $( \cA , \varphi )$ be a noncommutative 
probability space, and let

\noindent
$\bigl( \, \kappa_{\chi} : \cA^n \to \bC \, \bigr)_{ 
                         n \geq 1, \chi \in \{ \ell ,r \}^n }$
be the family of $( \ell , r )$-cumulant functionals of 
$( \cA , \varphi )$.
Let $a_1, b_1, \ldots , a_d, b_d$ be in $\cA$.  We say that
the pairs $(a_1, b_1), \ldots , (a_d, b_d)$ are 
{\em combinatorially-bi-free} to mean that the following 
condition is fulfilled: denoting $c_{i; \ell } := a_i$ and 
$c_{i;r} := b_i$ for $1 \leq i \leq d$, one has
\begin{equation}  \label{eqn:15c}
\left\{   \begin{array}{l}
\kappa_{\chi} 
\bigl( c_{i_1; h_1}, \ldots , c_{i_n; h_n} \bigr) = 0   \\
\mbox{ $\ $ whenever $n \geq 2$, 
       $i_1, \ldots , i_n \in \{ 1, \ldots , d \}$,
       $\chi = ( h_1, \ldots , h_n ) \in \{ \ell , r \}^n$ }  \\
\mbox{ $\ $ and $\exists \, 1 \leq p < q \leq n$
       such that $i_p \neq i_q$. }
\end{array}  \right.
\end{equation}

$\ $

$\ $

{\bf Question.}  Is it true that combinatorial-bi-freeness is 
equivalent to the (representation theoretic) 
concept of bi-freeness introduced by Voiculescu in \cite{V2013a}?

$\ $

$\ $

After the first version of the present paper was circulated, the 
above question was found to have an affirmative answer, in the 
paper \cite{CNS2014} by I. Charlesworth, B. Nelson and P. Skoufranis.

$\ $

$\ $

{\bf 1.6 Organization of the paper.}

Besides the present introduction, the paper has five other 
sections.  After some review of background in Section 2, we 
introduce the sets of partitions $\cPchi (n)$ in Section 3,
via the idea of examining double-ended queues.  The alternative
description of $\cPchi (n)$ via direct bijection with $NC(n)$
is presented in Section 4.  In Section 5 we introduce the family 
of $( \ell , r )$-cumulant functionals $\kappa_{\chi}$ that are 
associated to a noncommutative probability space.  Finally, in 
Section 6 we prove the main result of the paper, giving the 
formula for $( \ell , r )$-cumulants of canonical operators 
that was announced in Equation (\ref{eqn:14b}) above.  

$\ $

$\ $

\begin{center}
{\bf\large 2. Background on partitions and on Lukasiewicz paths}
\end{center}
\setcounter{section}{2}
\setcounter{equation}{0}
\setcounter{theorem}{0}

\begin{definition}   \label{def:2.1}
{\em [Partitions of $\{ 1, \ldots , n \}$.] }

\noindent
Let $n$ be a positive integer.  

\vspace{6pt}

$1^o$ We will let $\cP (n)$ denote the set of all partitions 
of $\{ 1, \ldots , n \}$.  A partition $\pi \in \cP (n)$ is 
thus a set $\pi = \{ V_1, \ldots , V_k \}$ where  
$V_1, \ldots , V_k$ (called the {\em blocks} of $\pi$) are
non-empty sets with $V_i \cap V_j = \emptyset$ for 
$i \neq j$ and with $\cup_{i=1}^k V_i = \{ 1, \ldots , n \}$. 

\vspace{6pt}

$2^o$ On $\cP (n)$ we consider the partial order given by
reverse refinement; that is, for $\pi, \rho \in \cP (n)$ we 
will write ``$\pi \leq \rho$'' to mean that for every block 
$V \in \pi$ there exists a block $W \in \rho$ such that 
$V \subseteq W$.
The minimal and maximal partition with respect to this partial 
order will be denoted as $0_n$ and $1_n$, respectively:
\begin{equation}  \label{eqn:21a}
0_n := \Bigl\{ \, \{ 1 \}, \ldots , \{ n \} \, \Bigr\} , 
\ \  1_n := \Bigl\{ \, \{ 1 , \ldots , n \} \, \Bigr\} .
\end{equation}

\vspace{6pt}

$3^o$ Let $\tau$ be a permutation of $\{ 1, \ldots , n \}$ and
let $\pi = \{ V_1, \ldots , V_k \}$ be in $\cP (n)$.  We will use 
the notation ``$\tau \cdot \pi$'' for the new partition 
$\tau \cdot \pi := 
\{ \, \tau (V_1) , \ldots , \tau (V_k) \, \} \in \cP (n)$.

\vspace{6pt}

$4^o$ Let $\pi$ be a partition in $\cP (n)$.  By the 
{\em opposite} of $\pi$ we will mean the partition 
\[
\piopp := \tau_o \cdot \pi \in \cP (n),
\]
where $\tau_o$ is the order-reversing permutation of 
$\{ 1, \ldots , n \}$ (with $\tau_o (m) = n+1-m$ for every
$1 \leq m \leq n$).

\vspace{6pt}

$5^o$  A partition $\pi \in \cP (n)$ is said to be 
{\em non-crossing} when it is not possible to find two distinct 
blocks $V, W \in \pi$ and numbers $a < b < c < d$ in
$\{ 1, \ldots , n \}$ such that $a,c \in V$ and $b,d \in W$.  
The set $NC(n)$ of all non-crossing partitions in $\cP (n)$ 
will play a significant role in this paper; for a review of basic 
some facts about it we refer to Lectures 9 
and 10 of \cite{NS2006}.  Let us record here that $NC(n)$ is one 
of the many combinatorial structures counted by Catalan numbers,
one has
$| \, NC(n) \, | = C_n := (2n)!/ n! (n+1)!$
(the $n$-th Catalan number).
\end{definition}

$\ $

\begin{definition}   \label{def:2.2} 
{\em [Lukasiewicz paths.] }

$1^o$
We will consider paths in $\bZ^2$ which start at $(0,0)$ and 
make steps of the form $(1,i)$ with $i \in \bN \cup \{ -1, 0 \}$.
Such a path,
\begin{equation}   \label{eqn:22a}
\lambda = \bigl( \, (0,0), 
(1,j_1), (2, j_2) , \ldots , (n,j_n) \, \bigr),
\end{equation} 
is said to be a {\em Lukasiewicz path} when it satisfies the 
conditions that $j_m \geq 0$ for every $1 \leq m \leq n$ and that
$j_n = 0$.

\vspace{6pt}

$2^o$ For every $n \geq 1$, we will use the notation $\Luk (n)$
for the set of all Lukasiewicz paths with $n$ steps.  For a path 
$\lambda \in \Luk (n)$ written as in Equation (\ref{eqn:22a}), 
we will refer to the vector
\begin{equation}   \label{eqn:22b}
\vec{\lambda} = ( j_1 - 0, j_2 -j_1, \ldots , j_n - j_{n-1} )
\in ( \bN \cup \{  -1, 0 \} )^n
\end{equation} 
as to the {\em rise-vector} of $\lambda$.  It is immediate how 
$\lambda$ can be retrieved from its rise-vector; moreover, it 
is immediate that a vector 
$(q_1, \ldots , q_n) \in ( \bN \cup \{  -1, 0 \} )^n$ appears 
as rise-vector $\vec{\lambda}$ for some $\lambda \in \Luk (n)$
if and only if its satisfies the conditions that 
\begin{equation}   \label{eqn:22c}
q_1 + \cdots + q_m \geq 0 
\mbox{ for every $1 \leq m \leq n$, and }
q_1 + \cdots + q_n = 0.
\end{equation}
\end{definition}

$\ $

$\ $

\begin{remark-and-def}   \label{rem-and-def:2.3}
{\em [The surjection $\Psi$ and the bijection $\Phi$.] }

\noindent
Let $n$ be a positive integer.

\vspace{6pt}

$1^o$ Let $\pi = \{ V_1, \ldots , V_k \}$ be a partition in 
$\cP (n)$.  Consider the vector 
$(q_1, \ldots , q_n) \in ( \, \bN \cup \{ -1, 0 \} \, )^n$ 
where for $1 \leq m \leq n$ we put
\begin{equation}   \label{eqn:23a}
q_m :=  \left\{  \begin{array}{ll}
|V_i| -1, & \mbox{ if $m = \min (V_i)$ for an 
                      $i \in \{ 1, \ldots , k \}$ },    \\
-1,       & \mbox{otherwise.}
\end{array}  \right.
\end{equation}
It is immediately seen that $(q_1, \ldots , q_n)$ satisfies 
the conditions listed in (\ref{eqn:22c}), hence it is the 
rise-vector of a uniquely determined path $\lambda \in \Luk (n)$.

\vspace{6pt}

$2^o$ We will denote by $\Psi : \cP (n) \to \Luk (n)$ (also 
denoted as $\Psi_n$, if needed to clarify what is $n$) the 
map which acts by
\[
\Psi ( \pi ) := \lambda, \ \ \pi \in \cP (n),
\]
with $\lambda$ obtained out of $\pi$ via the rise-vector 
described in (\ref{eqn:23a}).

\vspace{6pt}

$3^o$ The map $\Psi : \cP (n) \to \Luk (n)$ introduced above 
has the remarkable property that its restriction to $NC(n)$ 
gives a bijection from $NC(n)$ onto $\Luk (n)$; for the 
verification of this fact, see e.g. \cite{NS2006}, Proposition 
9.8.  We will denote by $\Phi : \Luk (n) \to NC(n)$ the inverse 
of this bijection.  That is: for every $\lambda \in \Luk (n)$,
we define $\Phi ( \lambda )$ to be the unique partition in 
$NC(n)$ which has the property that
\[
\Psi ( \, \Phi ( \lambda ) \, ) = \lambda .
\]
The bijection $\Phi$ confirms the well-known fact that
the set of paths $\Luk (n)$ has the same cardinality 
$C_n$ (Catalan number) as $NC(n)$.  
\end{remark-and-def}   

$\ $

$\ $

\begin{center}
{\bf\large 3. Double-ended queues and the sets of 
partitions \boldmath{$\cPchi (n)$} }
\end{center}
\setcounter{section}{3}
\setcounter{equation}{0}
\setcounter{theorem}{0}

\begin{Def-and-remark}  \label{def-and-rem:3.1}
{\em [Description of a deque device.] }

\noindent
We will work with a device called {\em double-ended queue}, 
or {\em deque} for short, which is used in the study of 
information structures in theoretical computer science (see 
e.g. \cite{K1973}, Section 2.2).  We will think about this 
device in the way depicted in Figures 1, 2 below, and described as 
follows.  Let $n$ be a fixed positive integer, and suppose we 
have $n$ labelled balls \baone, $\ldots ,$ \ban which have to 
move from an {\em input pipe} into an {\em output pipe} (both 
depicted vertically in the figures), by going through a 
{\em deque pipe} (depicted horizontally).  The deque device 
operates in discrete time: it goes through a sequence of states, 
recorded at times $t=0, \, t=1, \ldots , t=n$, where at time 
$t=0$ all the $n$ balls are in the input pipe and at $t=n$ they
are all in the output pipe.  Compared to the discussion in 
Knuth's treatise \cite{K1973}, we will limit the kinds of moves 
\footnote{ In \cite{K1973} the deque may perform any sequence of 
``insertions and deletions at either end of the queue'', where 
the term ``insertion'' designates the operation of moving some 
balls from the input pipe into the deque pipe, and ``deletion'' 
refers to moving some balls from the deque pipe into the output 
pipe.  In the present paper we will only allow the special moves
described in Definition \ref{def-and-rem:3.1}, which match, in 
some sense, the creation and annihilation performed by canonical 
operators on a full Fock space.}
that a deque can do, and we will require that:
\[
\left\{ \begin{array}{c}
\mbox{ For every $0 \leq i \leq n-1$, the deque
       device moves from its }                          \\
\mbox{ state at time $t=i$ to its state at 
       time $t=i+1$ by performing}                     \\
\mbox{ either a {\em ``left-$p$ move''} or a 
{\em ``right-$p$ move''}, where $p \in \bN \cup \{ 0 \}$. }
\end{array}  \right.
\]
The description of a left-$p$ move is like this:
\[
\left\{  \begin{array}{l}
\mbox{``Take the top $p$ balls that are in the input pipe, and 
        insert them }                                            \\
\mbox{into the deque pipe, from the {\em left}.  Then take the 
        {\em leftmost} ball in  }                                \\
\mbox{the deque pipe and insert it at the bottom of the 
        output pipe.''}
\end{array}  \right.
\]
The description of a right-$p$ move is analogous to the one of 
a left-$p$ move, only that the words ``left'' and ``leftmost'' 
get to be replaced by ``right'' and respectively ``rightmost''.

As is clear from the description of a left-$p$ (or right-$p$)
move, the number $p \in \bN \cup \{ 0 \}$ used in the move 
$(t=i) \rightsquigarrow (t=i+1)$ is subjected to the restriction 
that there exist $p$ balls (or more) in the input pipe at 
time $t=i$.  Note also the additional restriction that $p=0$
can be used in the move $(t=i) \rightsquigarrow (t=i+1)$ only if 
the device has at least 1 ball in the deque pipe at time $t=i$. 
\end{Def-and-remark} 

\vspace{3cm}

\begin{center}
\begin{tabular}{ccccccccccccccc}
 &        &  &  &  & $\vline$ &           & $\vline$ &  &  &  & 
                                                        &  &  &  \\
 & output & $\rightarrow$ &  &  & $\vline$ &  \batwo   
                                          & $\vline$ &  &  &  &  
                                                        &  &  &  \\
 &  pipe  &  &  &  & $\vline$ &  \bafiv   & $\vline$ &  &  &  &  
                                                        &  &  &  \\
 &        &  &  &  & $\vline$ &           & $\vline$ &  &  &  & 
                                                        &  &  &  \\
 &        &  &  &  &          &           &          &  &  &  &  
                                                        &  &  &  \\
 &        &  &  &  &          &           &          &  &  &  &  
                                                        &  &  &  \\
                                                           \hline
 &        &  &  &  &          &           &          &  &  &  & 
                                                        &  &  &  \\
 &        &  &  &  &  \baone  &  \bathr  
                                 &  \bafou  & & &  &  &  &  &    \\  
 &        &  &  &  &          &           &          &  &  &  &  
                                                        &  &  &  \\
                                                           \hline
 &        &  &  &  &          &           &          &  &  &  & 
                                                        &  &  &  \\
 &        & $\nearrow$ &  &   &           &          &  &  &  &  &  
                                                        &  &  &  \\
 & deque pipe &  &  &  &      &           &          &  &  &  &  
                                                        &  &  &  \\
 &        &  &  &  & $\vline$ &           & $\vline$ &  &  &  &  
                                                        &  &  &  \\
 &        &  &  &  & $\vline$ &  \basix   & $\vline$ &  &  &  & 
                                                        &  &  &  \\
 & input  & $\rightarrow$ &  &  & $\vline$ &  \basev  
                                         & $\vline$ &   &  &  &  
                                                        &  &  &  \\
 &  pipe  &  &  &  & $\vline$ &  \baeig  & $\vline$ &   &  &  &  
                                                        &  &  &  \\
 &        &  &  &  & $\vline$ &  \banin  & $\vline$ &   &  &  & 
                                                        &  &  &  \\
 &        &  &  &  & $\vline$ &           & $\vline$ &  &  &  & 
                                                        &  &  &  \\
 &        &  &  &  &          &           &          &  &  &  & 
                                                        &  &  & 
\end{tabular}

$\ $

$\ $

{\bf Figure 1:} {\em Say that $n=9$. Here is a possible state

of the deque device at time $t=2$.}
\end{center}

\newpage

\begin{center}
\begin{tabular}{cccccccccccccc}
 &    &  &  &  & $\vline$ &          & $\vline$ &  &  &  & 
                                                      &  &      \\
 &    &  &  &  & $\vline$ &  \batwo  & $\vline$ &  &  &  &  
                                                      &  &      \\
 &    &  &  &  & $\vline$ &  \bafiv   & $\vline$ &  &  &  &  
                                                       &  &     \\
 &    &  &  &  & $\vline$ &  \baeig   & $\vline$ &  &  &  &  
                                                       &  &     \\
 &    &  &  &  & $\vline$ &           & $\vline$ &  &  &  & 
                                                       &  &     \\
 &    &  &  &  &          &           &          &  &  &  &  
                                                       &  &     \\
 &    &  &  &  &          &           &          &  &  &  &  
                                                       &  &     \\
                                                           \hline
 &    &  &  &  &          &           &          &  &  &  & 
                                                       &  &     \\
 &    &  &  &  \basev     &  \basix   &  \baone  
            &  \bathr     &  \bafou   &          &  &  &  &     \\  
 &    &  &  &  &          &           &          &  &  &  &  
                                                       &  &     \\
                                                           \hline
 &    &  &  &  &          &           &          &  &  &  & 
                                                       &  &     \\
 &    &  &  &  &          &           &          &  &  &  &  
                                                       &  &     \\
 &    &  &  &  &          &           &          &  &  &  &  
                                                       &  &     \\
 &    &  &  &  & $\vline$ &           & $\vline$ &  &  &  &  
                                                       &  &     \\
 &    &  &  &  & $\vline$ &           & $\vline$ &  &  &  & 
                                                       &  &     \\
 &    &  &  &  & $\vline$ &  \banin   & $\vline$ &  &  &  &  
                                                       &  &     \\
 &    &  &  &  & $\vline$ &           & $\vline$ &  &  &  &  
                                                       &  &     \\
 &    &  &  &  & $\vline$ &           & $\vline$ &  &  &  & 
                                                       &  &     \\
 &    &  &  &  &          &           &          &  &  &  & 
                                                       &  &    
\end{tabular}

{\bf Figure 2:} {\em The deque device from Figure 1, at time 
$t=3$,

after performing a left-$3$ move.}

\vspace{0.45cm}

\begin{center}
******************************************************************
\end{center}

\vspace{0.45cm}

\begin{tabular}{cccccccccccccc}
 &    &  &  &  & $\vline$ &          & $\vline$ &  &  &  & 
                                                      &  &      \\
 &    &  &  &  & $\vline$ &  \batwo  & $\vline$ &  &  &  &  
                                                      &  &      \\
 &    &  &  &  & $\vline$ &  \bafiv   & $\vline$ &  &  &  &  
                                                       &  &     \\
 &    &  &  &  & $\vline$ &  \bafou   & $\vline$ &  &  &  &  
                                                       &  &     \\
 &    &  &  &  & $\vline$ &           & $\vline$ &  &  &  & 
                                                       &  &     \\
 &    &  &  &  &          &           &          &  &  &  &  
                                                       &  &     \\
 &    &  &  &  &          &           &          &  &  &  &  
                                                       &  &     \\
                                                           \hline
 &    &  &  &  &          &           &          &  &  &  & 
                                                       &  &     \\
 &    &  &  &  &  \baone  &  \bathr   &          &  &  &  &  
                                                       &  &       \\  
 &    &  &  &  &          &           &          &  &  &  &  
                                                       &  &     \\
                                                           \hline
 &    &  &  &  &          &           &          &  &  &  & 
                                                       &  &     \\
 &    &  &  &  &          &           &          &  &  &  &  
                                                       &  &     \\
 &    &  &  &  &          &           &          &  &  &  &  
                                                       &  &     \\
 &    &  &  &  & $\vline$ &           & $\vline$ &  &  &  &  
                                                       &  &     \\
 &    &  &  &  & $\vline$ &  \basix   & $\vline$ &  &  &  & 
                                                       &  &     \\
 &    &  &  &  & $\vline$ &  \basev   & $\vline$ &  &  &  &  
                                                       &  &     \\
 &    &  &  &  & $\vline$ &  \baeig   & $\vline$ &  &  &  &  
                                                       &  &     \\
 &    &  &  &  & $\vline$ &  \banin   & $\vline$ &  &  &  &  
                                                       &  &     \\
 &    &  &  &  & $\vline$ &           & $\vline$ &  &  &  & 
                                                       &  &     \\
 &    &  &  &  &          &           &          &  &  &  & 
                                                       &  &    
\end{tabular}

{\bf Figure 3:} {\em The deque device from Figure 1, at time 
$t=3$,

after performing a right-$0$ move.}
\end{center}

\vspace{2.5cm}

\begin{Def-and-remark}   \label{rem:3.2}
{\em [Deque-scenarios.] }

\noindent
Let $n$ be the same fixed positive integer as in Definition 
\ref{def-and-rem:3.1}, and consider the deque device described
there.  We assume that at time $t=0$ the $n$ balls are sitting 
in the input pipe in the order \baone, $\ldots ,$ \ban, counting 
top-down.  From the description of the moves of the device,
it is clear that for every $0 \leq i \leq n$ there are exactly
$i$ balls in the output pipe at time $t=i$.  In particular, 
all $n$ balls find themselves in the output pipe at time $t=n$ 
(even though they may not be sitting in the same order 
as at time $t=0$).

We will use the name {\em deque-scenario} to refer to a possible 
way of moving the $n$ balls through the deque device, according 
to the rules described above.  Every deque-scenario is thus 
determined by an array of the form
\begin{equation}   \label{eqn:32a}
\left(   \begin{array}{ccc}
p_1 &  \cdots & p_n    \\
h_1 &  \cdots & h_n 
\end{array}  \right) , \ \ 
\end{equation}
with $p_1, \ldots , p_n \in \bN \cup \{ 0 \}$ and 
$h_1, \ldots , h_n \in \{ \ell , r \}$; this array simply records 
the fact that in order to go from its state at time $t=i-1$ to 
its state at time $t=i$, the device has executed
\[
\left\{  \begin{array}{ll}
\mbox{a left-$p_i$ move,}  & \mbox{ if $h_i = \ell$  }   \\
                           &                             \\
\mbox{a right-$p_i$ move,} & \mbox{ if $h_i = r$ , } 
\end{array}  \right.   \ \ 1 \leq i \leq n.
\]

Let us observe that the top line of the array in (\ref{eqn:32a}) 
must satisfy the inequalities
\begin{equation}   \label{eqn:32b}
\left\{  \begin{array}{l}
p_1 + \cdots + p_i \geq i, \ \ \forall \, 1 \leq i \leq n,  \\
                                                            \\
\mbox{where for $i=n$ we must have } p_1 + \cdots + p_n =n.
\end{array}  \right.
\end{equation}
This is easily seen by counting that at time $t=i$ there 
are $i$ balls in the output pipe and $n- (p_1 + \cdots + p_i)$
balls in the input pipe, which leaves a difference of 
\[
n - \Bigl( i + n - (p_1 + \cdots + p_i) \Bigr) 
= (p_1 + \cdots + p_i) - i
\]
balls that must be in deque pipe. (And of course, the number of 
balls found in the deque pipe at $t=i$ must be $\geq 0$, with 
equality for $t=n$.)

It is easy to see that conversely, every array as in (\ref{eqn:32a})
with $p_1, \ldots , p_n$ satisfying (\ref{eqn:32b}) will define 
a working deque-scenario -- the inequalities 
$p_1 + \cdots + p_i \geq i$ ensure that we never run into the 
situation of having to ``move a ball out of the empty deque pipe''.

Thus, as a mathematical object, the set of deque-scenarios can 
be simply introduced as the set of arrays of the kind shown in
(\ref{eqn:32a}), and where (\ref{eqn:32b}) is satisfied.  

Moreover, let us observe that condition (\ref{eqn:32b}) can be 
read as saying that the $n$-tuple 
\begin{equation}   \label{eqn:32c}
( p_1 -1 , \ldots , p_n -1 ) \in 
\bigl( \bN \cup \{ -1, 0 \} \bigr)^n
\end{equation}  
is the rise-vector of a uniquely determined Lukasiewicz path 
$\lambda$, as reviewed in Section 2.  We will then refer to the 
deque-scenario described by the array (\ref{eqn:32a}) as the
{\em deque-scenario determined by $( \lambda , \chi )$}, where 
$\lambda \in \Luk (n)$ has rise-vector given by (\ref{eqn:32c})
and $\chi$ is the $n$-tuple 
$( h_1 , \ldots , h_n ) \in \{ \ell , r \}^n$ from the second 
line of (\ref{eqn:32a}).
\end{Def-and-remark}

$\ $

$\ $

\begin{Def-and-remark}    \label{def-and-rem:3.3}
{\em [Output-time partition associated to a deque-scenario.] }

\noindent
We consider the same notations as above and we look at the 
deque-scenario determined by $( \lambda , \chi )$, where 
$\lambda \in \Luk (n)$ has rise-vector 
$\vec{\lambda} = ( p_1 -1 , \ldots , p_n -1 )$ and where
$\chi = (h_1, \ldots , h_n) \in \{ \ell , r \}^n$.

Let $i \in \{ 1, \ldots , n \}$ be such that $p_i > 0$, and 
consider the $i$-th move of the deque device (the move that takes
the device from its state at $t=i-1$ to its state at $t=i$).  
In that move there is a group of $p_i$ balls (namely those with 
labels from $p_1 + \ldots + p_{i-1} +1$ to $p_1 + \ldots + p_i$)
which leave together the input pipe.  These balls arrive in 
the output pipe one by one, at various later times, which we
record as 
\begin{equation}  \label{eqn:33a}
t^{(i)}_1 < t^{(i)}_2 < \cdots < t^{(i)}_{p_i} .
\end{equation}
Observe that in particular we have $t^{(i)}_1 = i$; indeed, it is 
also part of the $i$-th move of the device that the ball with label
$p_1 + \cdots + p_i$ goes from the deque pipe into the output pipe.  
Let us make the notation
\[
T_i := \{ t^{(i)}_1, \ldots , t^{(i)}_{p_i} \}, 
\]
where $t^{(i)}_1, \ldots , t^{(i)}_{p_i}$ are from  (\ref{eqn:33a}). 

In the preceding paragraph we have thus constructed a set 
$T_i \subseteq \{ 1, \ldots , n \}$ for every $1 \leq i \leq n$
such that $p_i > 0$.  It is clear from the construction that 
for every such $i$ we have 
\begin{equation}  \label{eqn:33b}
| T_i | = p_i \mbox{ and } \min (T_i ) = i.
\end{equation}
It is also clear that the sets 
\begin{equation}  \label{eqn:33c}
\{ T_i \mid 1 \leq i \leq n \mbox{ such that } p_i > 0 \}
\end{equation}
form together a partition of $\pi \in \cP (n)$.  We will refer to
this $\pi$ as the {\em output-time partition} associated to the 
pair $( \lambda , \chi )$.
\end{Def-and-remark}

$\ $

$\ $

\begin{example}    \label{example:3.4}

$1^o$ A concrete example: say that $n=5$, that 
$\lambda \in \Luk (5)$ has rise-vector
$\vec{\lambda} = (2, -1, 1, -1 , -1)$, and that 
$\chi = ( r, \ell , \ell , r, \ell )$.
In the deque-scenario associated to this pair $( \lambda , \chi )$,
there are two groups of balls that are moved from the input pipe 
into the deque pipe: first group consists of
\baone, \batwo, \bathr,
which arrive in the deque pipe at time $t=1$; the second group 
consists of \bafou, \bafiv, which arrive in the 
deque pipe at time $t=3$.  The final order of the balls in the 
output pipe (counting downwards) is 
\begin{center}
\bathr,   \baone,   \bafiv,  \batwo,  \bafou,
\end{center}
and the output-time partition associated to $( \lambda , \chi )$ 
is $\pi = \{ \, \{ 1,2,4 \} , \, \{ 3, 5 \}  \, \} \in \cP (5)$.

\vspace{6pt}

$2^o$ Let $n$ be a positive integer, and consider the $n$-tuple 
$\chi_{ { }_{\ell} } := 
( \ell, \ldots , \ell ) \in \{ \ell , r \}^n$.
For any $\lambda \in \Luk (n)$, the deque-scenario determined by
$\lambda$ and $\chi_{ { }_{\ell} }$ is what one might call a 
``lifo-stack process'' (where lifo is a commonly used abbreviation 
for last-in-first-out).  It is easy to see that the output-time
partition associated to the pair $( \lambda , \chi_{ { }_{\ell} } )$ 
is the non-crossing partition $\Phi ( \lambda )$, where 
$\Phi : \Luk (n) \to \cP (n)$ is as reviewed in Remark 
\ref{rem-and-def:2.3}.3.

A similar statement holds if instead of $\chi_{ { }_{\ell} }$ 
we use the $n$-tuple $\chi_{ { }_r } := (r, \ldots ,r)$;
that is, the output-time partition associated to 
$( \lambda , \chi_{ { }_r })$ is the same 
$\Phi ( \lambda ) \in NC(n)$ as above.
\end{example}

$\ $

$\ $

\begin{definition}   \label{def:3.5}

$1^o$  Consider a pair $( \lambda , \chi )$ where 
$\lambda \in \Luk (n)$ and $\chi \in \{ \ell , r \}^n$, and 
let $\pi \in \cP (n)$ be the output-time partition associated
to $( \lambda , \chi )$ in Definition \ref{def-and-rem:3.3}.
We will denote this partition $\pi$ as 
``$\Phi_{\chi} ( \lambda )$''.

\vspace{6pt}

$2^o$  Let $\chi$ be an $n$-tuple in $\{ \ell , r \}^n$.  The 
notation introduced in $1^o$ above defines a function 
$\Phi_{\chi} : \Luk (n) \to \cP (n)$.  We define
\begin{equation}   \label{eqn:35a}
\cPchi (n) := \{  \Phi_{\chi} ( \lambda ) \mid 
\lambda \in \Luk (n) \} \subseteq \cP (n).
\end{equation}
\end{definition}

$\ $

\begin{proposition}   \label{prop:3.6}
Let $n$ be a positive integer, let $\chi$ be an $n$-tuple 
in $\{ \ell , r \}^n$, and consider the function 
$\Phi_{\chi} : \Luk (n) \to \cP (n)$ introduced in 
Definition \ref{def:3.5}.  

$1^o$ $\Phi_{\chi}$ is injective, hence it gives a bijection 
between $\Luk (n)$ and $\cPchi (n)$.

$2^o$ Let $\Psi_{\chi} : \cPchi (n) \to \Luk (n)$ be the 
function inverse to $\Phi_{\chi}$.  Then $\Psi_{\chi}$ is the 
restriction to $\cPchi (n)$ of the canonical surjection 
$\Psi : \cP (n) \to \Luk (n)$ that was reviewed in Remark 
\ref{rem-and-def:2.3}.2.
\end{proposition}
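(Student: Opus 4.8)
The plan is to deduce both parts of the proposition from the single identity
\[
\Psi \bigl( \Phi_{\chi} ( \lambda ) \bigr) = \lambda ,
\qquad \lambda \in \Luk (n),
\]
where $\Psi : \cP (n) \to \Luk (n)$ is the surjection recalled in Remark \ref{rem-and-def:2.3}.2. Once this identity is in hand, $\Psi$ is a left inverse for $\Phi_{\chi}$, which yields the injectivity claimed in $1^o$; and since $\Phi_{\chi}$ maps $\Luk (n)$ onto $\cPchi (n)$ by the defining Equation (\ref{eqn:35a}), it follows that $\Phi_{\chi} : \Luk (n) \to \cPchi (n)$ is a bijection whose two-sided inverse is the restriction $\Psi |_{\cPchi (n)}$. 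This is precisely the assertion of $2^o$.

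For the identity itself I would argue directly from the construction of the output-time partition. Fix $\lambda \in \Luk (n)$ and write its rise-vector as $\vec{\lambda} = ( p_1 - 1, \ldots , p_n - 1 )$ with $p_1, \ldots , p_n \in \bN \cup \{ 0 \}$, following (\ref{eqn:32c}); put $\pi := \Phi_{\chi} ( \lambda )$. By Definition and Remark \ref{def-and-rem:3.3}, the blocks of $\pi$ are exactly the sets $T_i$ indexed by the $i \in \{ 1, \ldots , n \}$ with $p_i > 0$, and by (\ref{eqn:33b}) each such block satisfies $| T_i | = p_i$ and $\min ( T_i ) = i$; in particular the set of minima of blocks of $\pi$ is precisely $\{ i \mid p_i > 0 \}$, with no repetitions. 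Reading off the rise-vector $( q_1, \ldots , q_n )$ of $\Psi ( \pi )$ from formula (\ref{eqn:23a}): when $p_m > 0$ the index $m = \min ( T_m )$ is a block-minimum, so $q_m = | T_m | - 1 = p_m - 1$; when $p_m = 0$ the index $m$ is not a block-minimum (it cannot equal $\min ( T_i ) = i$ for any $i$ with $p_i > 0$), so $q_m = -1 = p_m - 1$. Hence $( q_1, \ldots , q_n ) = \vec{\lambda}$, i.e. $\Psi ( \pi ) = \lambda$.

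I do not expect a genuine obstacle here: the substantive work — the two properties $| T_i | = p_i$ and $\min ( T_i ) = i$ of the blocks of an output-time partition — is already carried out in Definition and Remark \ref{def-and-rem:3.3}, so the proof is really a matter of matching those properties against the explicit recipe for $\Psi$ in (\ref{eqn:23a}). The one point on which I would be careful is the bookkeeping of block-minima, namely the claim that $\{ i \mid p_i > 0 \}$ is exactly the set of block-minima of $\Phi_{\chi} ( \lambda )$ with no coincidences; but this is immediate from $\min ( T_i ) = i$. As a sanity check, when $\chi = ( \ell , \ldots , \ell )$ or $( r , \ldots , r )$ one has $\Phi_{\chi} = \Phi$ by Example \ref{example:3.4}.2, and the identity $\Psi \circ \Phi_{\chi} = \mathrm{id}_{\Luk (n)}$ specializes to the known relation $\Psi \circ \Phi = \mathrm{id}_{\Luk (n)}$ of Remark \ref{rem-and-def:2.3}.3.
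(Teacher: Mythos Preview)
Your proposal is correct and follows essentially the same approach as the paper: reduce both parts to the identity $\Psi \circ \Phi_{\chi} = \mathrm{id}_{\Luk(n)}$, and verify that identity via the properties $|T_i| = p_i$ and $\min(T_i) = i$ recorded in (\ref{eqn:33b}). The paper's proof is terser (it simply declares the identity ``clear from the observation made in (\ref{eqn:33b})''), but your explicit unpacking of the rise-vector computation is the same argument written out in full.
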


\begin{proof}  
Both parts of the proposition will follow if we can prove that 
$\Psi \circ \Phi_{\chi}$ is the identity map on $\Luk (n)$.  Thus
given a path $\lambda \in \Luk (n)$ and denoting 
$\Phi_{\chi} ( \lambda ) =: \pi$, we have to show that 
$\Psi ( \pi ) = \lambda$.  But the latter fact is clear from the 
observation made in (\ref{eqn:33b}) of Remark \ref{def-and-rem:3.3}.
\end{proof}

$\ $

\begin{remark}   \label{rem:3.7}
Let $n$ be a positive integer.

$1^o$ From Proposition \ref{prop:3.6} and the fact that 
$| \Luk (n) | = C_n$ ($n$-th Catalan number), it follows 
that $| \cPchi (n) | = C_n$ for every 
$\chi \in \{ \ell , r \}^n$.

\vspace{6pt}

$2^o$ Suppose that $\chi = ( \ell, \ldots , \ell )$.
The discussion from Example \ref{example:3.4}.2 shows that in 
this case we have $\cPchi (n) = NC(n)$.  Similarly, we also have
$\cPchi (n) = NC(n)$ in the case when 
$\chi = (r, \ldots , r)$.

\vspace{6pt}

$3^o$ If $n \leq 3$, then it is clear from cardinality 
considerations that $\cPchi (n) = \cP (n) = NC(n)$, no matter 
what $\chi \in \{ \ell , r \}^n$ we consider.  

For $n \geq 4$, cardinality considerations now show that 
$\cPchi (n)$ is a proper subset of $\cP (n)$.  It is usually 
different from $NC(n)$.  (For instance the output-time partition 
from Example \ref{example:3.4}.1 is not in $NC(5)$, showing that  
$\chi = ( r, \ell, \ell , r, \ell ) \in \{ \ell , r \}^5$ has 
$\cPchi (5) \neq NC(5)$.)
Some general properties of the sets of partitions $\cPchi (n)$ 
will follow from their alternative description provided in the 
next section.
\end{remark}

$\ $

$\ $

\begin{center}
{\bf\large 4. An alternative description 
              for \boldmath{$\cPchi (n)$} }
\end{center}
\setcounter{section}{4}
\setcounter{equation}{0}
\setcounter{theorem}{0}

In this section we put into evidence a bijection between 
$\cPchi (n)$ and $NC(n)$ which is implemented by the action of a
special permutation $\xtau_{ { }_{\chi} }$ of $\{ 1, \ldots , n \}$.
The main result of the section is Theorem \ref{thm:4.9}.  We will 
arrive to it by observing a certain construction of partition in 
$NC(n)$  -- the ``combined-standings partition'' associated to a 
pair $( \lambda , \chi ) \in \Luk (n) \times \{ \ell, r \}^n$,
which is introduced in Definition \ref{def:4.3}.

$\ $

\begin{definition}   \label{def:4.1}
Consider a pair $( \lambda , \chi )$ where $\lambda \in \Luk (n)$ 
and $\chi = (h_1, \ldots , h_n) \in \{ \ell , r \}^n$, and 
let $\pi \in \cP (n)$ be the output-time partition associated
to $( \lambda , \chi )$ in Definition \ref{def-and-rem:3.3}.
Let us record explicitly where are the occurrences of $\ell$ and 
of $r$ in $\chi$:
\begin{equation}   \label{eqn:41a}
\left\{   \begin{array}{ll}
\{ m \mid 1 \leq m \leq n, \, h_m = \ell \}
=: \{ m_{\ell} (1), \ldots , m_{\ell} (u) \} &  \mbox{with
                 $m_{\ell} (1) < \cdots < m_{\ell} (u)$,}       \\
                          &                                     \\
\{ m \mid 1 \leq m \leq n, \, h_m = r \}
=: \{ m_r (1), \ldots , m_r (v) \} &  \mbox{with
                              $m_r (1) < \cdots < m_r (v)$.}
\end{array}  \right.
\end{equation}

\vspace{6pt}

$1^o$ Suppose that in (\ref{eqn:41a}) we have $u \neq 0$.  We 
define a partition $\leftstand \in \cP (u)$ by the following 
prescription: two numbers $q,q' \in \{ 1, \ldots , u \}$ are 
in the same block of $\leftstand$ if and only if the numbers 
$m_{\ell} (q), m_{\ell} (q') \in \{ 1, \ldots , n \}$ belong 
to the same block of $\pi$.  The partition $\leftstand$ will 
be called the {\em left-standings partition} associated to 
$( \lambda , \chi )$. 

\vspace{6pt}

$2^o$ Likewise, if in (\ref{eqn:41a}) we have $v \neq 0$,  
then we define a partition $\rightstand \in \cP (v)$ via the 
prescription that $q,q' \in \{ 1, \ldots , v \}$ belong to the 
same block of $\rightstand$ if and only if $m_r (q), m_r (q')$ 
are in the same block of $\pi$.  The partition $\rightstand$ 
will be called the {\em right-standings partition} associated 
to $( \lambda , \chi )$. 
\end{definition}

$\ $

$\ $

\begin{remark-and-def}    \label{rem-and-def:4.2}
Consider the framework of Definition \ref{def:4.1}.  It will 
help the subsequent discussion if at this point we introduce 
some more terminology, which will also clarify the names chosen
above for the partitions $\leftstand$ and $\rightstand$.

\vspace{6pt}

$1^o$ Same as in Section 3, we will think of the numbers in 
$\{ 1, \ldots , n \}$ as of moments in time.  We will say that 
$t \in \{ 1, \ldots , n \}$ is a {\em left-time} (respectively a 
{\em right-time}) for $\chi$ to mean that $h_t = \ell$ (respectively
that $h_t = r$).  If $t$ is a left-time for $\chi$, then the unique 
$q \in \{ 1, \ldots , u \}$ such that $t = m_{\ell} (q)$ will be 
called the {\em left-standing of $t$ in $\chi$}.  Likewise, if $t$ 
is a right-time for $\chi$, then the unique 
$q \in \{ 1, \ldots , v \}$ such that $t = m_r (q)$ will be called 
the {\em right-standing of $t$ in $\chi$}.  

\vspace{6pt}

$2^o$ Let the rise-vector of $\lambda$ be
$\vec{\lambda} = ( p_1 -1, \ldots , p_n -1)$, with 
$p_1, \ldots , p_n \in \bN \cup \{ 0 \}$.  The numbers in the set
\begin{equation}  \label{eqn:42a}
I := \{ 1 \leq i \leq n \mid p_i > 0 \}
\end{equation}
will be called {\em insertion times} for $( \lambda , \chi )$.
Recall that the output-time partition $\pi$ associated to 
$( \lambda , \chi )$ has its blocks indexed by $I$; indeed, 
Equation (\ref{eqn:33c}) in Definition \ref{def-and-rem:3.3}
introduces this partition as 
\begin{equation}  \label{eqn:42}
\pi = \{ T_i \mid i \in I \}.
\end{equation}
With a slight abuse of notation, $\leftstand$ and $\rightstand$ 
from Definition \ref{def:4.1} can be written as
\begin{equation}  \label{eqn:42b}
\leftstand = \{ V_i \mid i \in I \} \ \ \mbox{ and } \ \ 
\rightstand = \{ W_i \mid i \in I \} 
\end{equation}
where for every $i \in I$ we put
\begin{equation}  \label{eqn:42c}
V_i := \{ 1 \leq q \leq u \mid m_{\ell} (q) \in T_i \}
\ \mbox{ and } \ 
W_i := \{ 1 \leq q \leq v \mid m_r (q) \in T_i \} .
\end{equation}
(Every $V_i$ is a block of $\leftstand$ unless $V_i = \emptyset$,
and every $W_i$ is a block of $\rightstand$ unless 
$W_i = \emptyset$.  Note that $V_i$ and $W_i$ cannot be empty at the
same time, since $| V_i | + |W_i| = |T_i| = p_i > 0$.)
\end{remark-and-def}

$\ $

$\ $

\begin{definition}   \label{def:4.3}
We continue to consider the framework of Definition \ref{def:4.1}
and of Notation \ref{rem-and-def:4.2}.  For every $i \in I$ let
us denote
\begin{equation}   \label{eqn:43a}
(n+1) - W_i := \{ n+1-q \mid q \in W_i \} 
\subseteq \{ u+1, \ldots , n \}.
\end{equation}
The partition 
\begin{equation}   \label{eqn:43b}
\cstand := \{ V_i \cup ( \, (n+1)-W_i \, ) \mid i \in I \}
\end{equation}
will be called the {\em combined-standings partition} associated
to $( \lambda , \chi )$.
\end{definition}

$\ $

$\ $

\begin{remark}   \label{rem:4.4}
The blocks of the partition $\cstand$ are indexed by the same 
set $I$ of insertion times that was used to index the blocks 
of the output-times partition $\pi = \{ T_i \mid i \in I \}$
in Notation \ref{rem-and-def:4.2}.  Moreover, we have
\[
| V_i \cup ( \, (n+1)-W_i \, ) |
= | V_i | \, + \, |W_i| = |T_i|, \ \ \forall \, i \in I;
\]
this shows that it must be possible to go between $\pi$ and 
$\cstand$ via the action of some suitably chosen permutation of 
$\{ 1, \ldots, n \}$.  We next make the easy yet significant
observation that the permutation in question can be picked so that
it only depends on $\chi$ (even though each of $\pi$ and $\cstand$ 
depends not only on $\chi$, but also on $\lambda$).
\end{remark}

$\ $

$\ $

\begin{definition}   \label{def:4.7}
Let $\chi$ be a tuple in $\{ \ell , r \}^n$.
We associate to $\chi$ a permutation $\xtau_{ { }_{\chi} }$ of 
$\{ 1, \ldots , n \}$ defined (in two-line notation for permutations)
as
\begin{equation}   \label{eqn:47a}
\xtau_{ { }_{\chi} } := \left(
\begin{array}{cccccc}
 1  & \cdots &  u  &  u+1  & \cdots &  n    \\
m_{\ell} (1) & \cdots & m_{\ell} (u) &  
    m_r (v)  & \cdots & m_r (1)
\end{array}  \right) ,
\end{equation}
where $m_{\ell} (1) < \cdots < m_{\ell} (u)$ and
$m_r (1) < \cdots < m_r (v)$ are as in Definition \ref{def:4.1} 
(the lists of occurrences of ``$\ell$'' and ``$r$'' in $\chi$).

In (\ref{eqn:47a}) we include the possibility that $v=0$ (when 
$u=n$ and $\xtau_{ { }_{\chi} }$ is the identity permutation), 
or that $u = 0$ (when $v=n$ and 
$\xtau_{ { }_{\chi} } (m) = n+1-m$ for every $1 \leq m \leq n$).
\end{definition}

$\ $

$\ $

\begin{lemma}  \label{lemma:4.8}
Consider a pair $( \lambda , \chi ) \in \Luk (n) \times 
\{ \ell, r \}^n$, and let $\pi \in \cP (n)$ be the output-time 
partition associated to $( \lambda , \chi )$ in Definition 
\ref{def-and-rem:3.3}.  We have 
\begin{equation}   \label{eqn:48a}
\xtau_{ { }_{\chi} } \cdot \cstand = \pi ,
\end{equation}
where $\cstand$ and $\xtau_{ { }_{\chi} }$ are as in Definitions 
\ref{def:4.3} and \ref{def:4.7}, respectively, and where the 
action of a permutation on a partition is as reviewed in 
Definition \ref{def:2.1}.3.
\end{lemma}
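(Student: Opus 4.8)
The plan is to verify (\ref{eqn:48a}) by a direct computation, checking that the permutation $\xtau_{ { }_{\chi} }$ carries each block of $\cstand$ onto the corresponding block of $\pi$. Recall from Notation \ref{rem-and-def:4.2} that both partitions have their blocks indexed by the same set $I$ of insertion times: one has $\pi = \{ T_i \mid i \in I \}$, while $\cstand = \{ V_i \cup ( (n+1)-W_i ) \mid i \in I \}$ with $V_i$ and $W_i$ as in (\ref{eqn:42c}). So I would reduce the claim to proving that $\xtau_{ { }_{\chi} } ( V_i \cup ( (n+1)-W_i ) ) = T_i$ for every fixed $i \in I$.

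The main step is to record precisely how $\xtau_{ { }_{\chi} }$ acts on the two ``halves'' $\{ 1, \ldots , u \}$ and $\{ u+1, \ldots , n \}$ of its domain. From the two-line description (\ref{eqn:47a}) one reads off that $\xtau_{ { }_{\chi} } (q) = m_{\ell} (q)$ for $q \in \{ 1, \ldots , u \}$, so $\xtau_{ { }_{\chi} }$ maps $\{ 1, \ldots , u \}$ bijectively onto the set of left-times of $\chi$, via $q \mapsto m_{\ell} (q)$. For the other half, given $q \in \{ 1, \ldots , v \}$, the element $n+1-q$ lies in $\{ u+1, \ldots , n \}$ (since $u+v=n$), and writing $n+1-q = u+k$ with $k = v+1-q$, formula (\ref{eqn:47a}) gives $\xtau_{ { }_{\chi} } (n+1-q) = m_r (v+1-k) = m_r (q)$. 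Thus $\xtau_{ { }_{\chi} }$ maps $\{ u+1, \ldots , n \}$ bijectively onto the set of right-times of $\chi$, via $n+1-q \mapsto m_r (q)$.

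Next I would feed these two observations into the block $V_i \cup ( (n+1)-W_i )$. On the first piece, $\xtau_{ { }_{\chi} } ( V_i ) = \{ m_{\ell} (q) \mid q \in V_i \}$, which by the definition of $V_i$ in (\ref{eqn:42c}) equals $\{ m_{\ell} (q) \mid 1 \le q \le u, \ m_{\ell} (q) \in T_i \}$, i.e.\ the set of all left-times lying in $T_i$. On the second piece, the substitution $j = n+1-q$ (together with (\ref{eqn:43a})) gives $\xtau_{ { }_{\chi} } ( (n+1)-W_i ) = \{ m_r (q) \mid q \in W_i \} = \{ m_r (q) \mid 1 \le q \le v, \ m_r (q) \in T_i \}$, the set of all right-times lying in $T_i$. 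Since $V_i \subseteq \{ 1, \ldots , u \}$ and $(n+1)-W_i \subseteq \{ u+1, \ldots , n \}$ are disjoint, while $\{ 1, \ldots , n \}$ is the disjoint union of the left-times and the right-times of $\chi$, the union of the two images is precisely the set of all elements of $\{ 1, \ldots , n \}$ lying in $T_i$, namely $T_i$ itself. (If $V_i = \emptyset$, or $W_i = \emptyset$, the corresponding image is empty and the argument is unaffected; by the remark following (\ref{eqn:42c}) the two are never simultaneously empty.) This yields (\ref{eqn:48a}).

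I do not expect a genuine obstacle here, since the statement is essentially a bookkeeping exercise unwinding Definitions \ref{def:4.1}, \ref{def:4.3} and \ref{def:4.7}. The one point that calls for care is matching the order conventions: $\xtau_{ { }_{\chi} }$ lists the right-times in \emph{decreasing} order of their right-standing, whereas the set $(n+1)-W_i$ is built from $W_i$ without reversing it. The computation above shows that the ``reversal'' hidden in the second line of (\ref{eqn:47a}) and the ``reversal'' $q \mapsto n+1-q$ used in (\ref{eqn:43a}) cancel each other out -- which is exactly why $\cstand$ was defined using $(n+1)-W_i$ rather than a plain shift of $W_i$.
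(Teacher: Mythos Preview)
Your proof is correct and follows essentially the same approach as the paper: both reduce to showing $\xtau_{ { }_{\chi} } \bigl( V_i \cup ((n+1)-W_i) \bigr) = T_i$ for each $i \in I$, and both verify this by computing $\xtau_{ { }_{\chi} }(q) = m_{\ell}(q)$ for $q \in V_i$ and $\xtau_{ { }_{\chi} }(n+1-q) = m_r(q)$ for $q \in W_i$. The only cosmetic difference is that the paper invokes the equal cardinalities $|V_i \cup ((n+1)-W_i)| = |T_i|$ to reduce to the inclusion ``$\subseteq$'', whereas you establish equality directly by observing that the two images are exactly the left-times and right-times in $T_i$.
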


\begin{proof}  We use the notations established earlier in this 
section.  Clearly, (\ref{eqn:48a}) will follow if we prove that 
\begin{equation}   \label{eqn:48b}
\xtau_{ { }_{\chi} } 
\bigl( \, V_i \cup ( \, (n+1) - W_i \, ) \, \bigr)
= T_i, \ \ \forall \, i \in I.
\end{equation}

Let us fix an $i \in I$ for which we verify that (\ref{eqn:48b}) 
holds.  Since the sets $V_i \cup ( \, (n+1) - W_i \, ) \, )$ 
and $T_i$ have the same cardinality, it suffices to verify the 
inclusion ``$\subseteq$'' of the equality.  And indeed,
referring to how the permutation $\xtau_{ { }_{\chi} }$ is defined
in Equation (\ref{eqn:47a}), we have:
\[
q \in V_i  \ \Rightarrow \ 
\xtau_{ { }_{\chi} } (q) = m_{\ell} (q) \in T_i, \ \ \mbox{ and}
\]
\[
q \in (n+1)- W_i \ \Rightarrow \ 
\xtau_{ { }_{\chi} } (q) = m_r (n+1-q) \in T_i
\]
(where the fact that $m_r (n+1-q) \in T_i$ comes from Equation
(\ref{eqn:42c}), used for the element $n+1-q \in W_i$).  Thus both
$\xtau_{ { }_{\chi} } ( V_i )$ and 
$\xtau_{ { }_{\chi} } ( \, (n+1) - W_i \, )$ are subsets of $ T_i$,
and (\ref{eqn:48b}) follows.
\end{proof}  

$\ $

$\ $

\begin{example}    \label{example:4.11}
Consider (same as in Example \ref{example:3.4}.1) the concrete 
case when $n=5$, $\chi = ( r , \ell , \ell , r, \ell )$, and 
$\lambda \in \Luk (5)$ has rise-vector 
$\vec{\lambda} = ( 2, -1, 1, -1, -1 )$.  
As found in Example \ref{example:3.4}.1, the output-time partition 
associated to this $( \lambda , \chi )$ is 
$\pi = \{ \, \{ 1,2,4 \} , \, \{ 3,5 \} \, \}$.  
The set of insertion times for $( \lambda , \chi )$ of this example
is $I = \{ 1, 3 \}$; in order to illustrate the system of 
notation from Equation (\ref{eqn:42}), we then write $\pi$ as 
\[
\pi = \{ T_1, T_3 \}, \ \ \mbox{ with $T_1 = \{ 1,2,4 \}$
and $T_3 = \{ 3,5 \}$. }
\]
The left-times for $\chi$ are
$m_{\ell} (1) = 2, m_{\ell} (2) = 3, m_{\ell} (3) = 5$, 
and the right-times are $m_r (1) = 1, m_r (2) = 4$.  Since 
$m_{\ell} (1) \in T_1$ and $m_{\ell} (2), m_{\ell} (3) \in T_3$,
we get (in reference to the notations from Equations (\ref{eqn:42b})
and (\ref{eqn:42c})) that
\[
V_1 = \{ 1 \}, V_3 = \{ 2,3 \}, \mbox{ hence }
\leftstand = \{ \, \{ 1 \}, \, \{ 2,3 \} \, \} \in \cP (3).
\]
For the right-times we have $m_r (1), m_r (2) \in T_1$, giving
us that
\[
W_1 = \{ 1,2 \}, W_3 = \emptyset, \mbox{ hence }
\rightstand = \{ \, \{ 1,2 \} \, \} \in \cP (2).
\]
The combined-standings partition $\cstand$ associated to 
$( \lambda , \chi )$ has blocks
\[
V_1 \cup (6 - W_1) = \{ 1 \} \cup \{ 4,5 \} \ \mbox{ and }
\ V_3 \cup (6 - W_3) = \{ 2,3 \} \cup \emptyset,
\]
hence $\cstand = \{ \, \{ 1,4,5 \}, \, \{ 2,3 \} \, \}$.

Finally, the permutation associated to $\chi$ is
\[
\xtau_{ { }_{\chi} } =  \left(
\begin{array}{ccccc}
 1  &  2  &  3  &  4 & 5   \\
 2  &  3  &  5  &  4 & 1
\end{array}  \right) .
\]
As explained in the proof of Lemma \ref{lemma:4.8}, we have that
$\xtau_{ { }_{\chi} } ( \, \{ 1,4,5 \} \, ) = \{ 1,2,4 \} = T_1$
and $\xtau_{ { }_{\chi} } ( \, \{ 2,3 \} \, ) = \{ 3,5 \} = T_3$,
leading to the equality
$\xtau_{ { }_{\chi} } \cdot \cstand = \pi$.
\end{example}

$\ $

$\ $

Our next goal is to prove that the combined-standings partition
$\cstand$ always is a non-crossing partition.  In order to obtain
this, we first prove a lemma.

$\ $

\begin{lemma}  \label{lemma:4.5}
Consider the framework and notations of Definition \ref{def:4.3}.
Let us denote the maximal element of $I$ by $j$, and let us consider 
the block $S= V_j \cup \bigl( \, (n+1)-W_j \, \bigr)$ of the 
partition $\cstand$.  Then $S$ is an interval-block (i.e. 
$S= [t',t''] \cap \bZ$ for some $t' \leq t''$ in 
$\{1, \ldots , n \}$).
\end{lemma}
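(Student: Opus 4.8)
The plan is to trace explicitly what becomes of the last group of balls to enter the deque pipe — the $p_j$ balls inserted during the $j$-th move, $j=\max(I)$ — and then to read off the resulting description of the two pieces $V_j$ and $(n+1)-W_j$ of $S$. The decisive structural fact, and the reason for singling out the maximal insertion time, is that $p_i=0$ for every $i$ with $j<i\le n$; hence from time $t=j$ on the device performs only removals, one ball leaving the deque pipe at each of the times $j+1,\dots,n$, taken from the left end if that move is a left move and from the right end if it is a right move.

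Granting this, let $b_1,\dots,b_{n-j}$ denote the balls present in the deque pipe at time $t=j$, listed from left to right (there are $n-j$ of them, since $p_1+\cdots+p_j=n$). The moves after time $j$ then consume this sequence from its two ends: if $u'$ and $v'=(n-j)-u'$ are the numbers of left-times, respectively right-times, among $\{j+1,\dots,n\}$, then for $1\le k\le u'$ the ball $b_k$ leaves at the $k$-th left-time after $j$, and for $u'<k\le n-j$ the ball $b_k$ leaves at the $(n-j+1-k)$-th right-time after $j$. Since the left-standing (resp.\ right-standing) of a time $t$ is exactly the number of left-times (resp.\ right-times) which are $\le t$, this determines the standing of the output time of each $b_k$.

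Next I would identify which $b_k$ were inserted during the $j$-th move. Assume $h_j=\ell$; the case $h_j=r$ is entirely analogous with ``left'' and ``right'' interchanged throughout. In a left-$p_j$ move the $p_j$ new balls are all pushed onto the left end and the leftmost of them is removed at once — its output time being $j=\min(T_j)$ — so the $p_j-1$ new balls that remain are exactly $b_1,\dots,b_{p_j-1}$ (these positions exist because $T_j\subseteq\{j,\dots,n\}$ has $p_j$ elements, so $p_j-1\le n-j$). Thus $T_j=\{j\}\cup\{\text{output time of }b_k:1\le k\le p_j-1\}$. Plugging this into the description from the previous paragraph and writing $w$ for the left-standing of $j$, one obtains: if $p_j-1\le u'$ then all elements of $T_j$ are left-times, so $W_j=\emptyset$ and $V_j=\{w,w+1,\dots,w+p_j-1\}$; while if $p_j-1>u'$ then $V_j=\{w,w+1,\dots,u\}$ and $W_j=\{v-p_j+u'+2,\dots,v-1,v\}$. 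In both subcases $V_j$ is a set of consecutive integers that either ends at $u$ or is accompanied by $W_j=\emptyset$, and $W_j$, when nonempty, is a set of consecutive integers ending at $v$; since $(n+1)-\{c,c+1,\dots,v\}=\{u+1,u+2,\dots,n+1-c\}$, the two pieces abut and $S=V_j\cup\bigl((n+1)-W_j\bigr)$ is the single interval $\{w,\dots,u+p_j-u'-1\}$ (or $\{w,\dots,w+p_j-1\}$ when $W_j=\emptyset$), which is the assertion.

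The genuine work here is just careful bookkeeping: getting the ``consumed from both ends'' picture set up with the right indices, remembering that it is the \emph{leftmost} (not the rightmost) of the freshly inserted balls that exits at time $j$, and lining up the two subcases $p_j-1\le u'$ and $p_j-1>u'$ with the correct final-segment descriptions of $V_j$ and $W_j$. I do not expect any conceptual obstacle beyond that — the entire argument rests on the single observation that no insertions happen after the maximal insertion time.
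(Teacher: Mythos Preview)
Your proof is correct and follows essentially the same approach as the paper's: both arguments hinge on the observation that after the maximal insertion time $j$ the deque undergoes only removals, so the balls present at time $t=j$ are consumed from the two ends, forcing $V_j$ and $W_j$ to be terminal segments of $\{1,\ldots,u\}$ and $\{1,\ldots,v\}$ respectively. The only stylistic difference is that where the paper uses a ``blocking'' argument to see which balls exit on which side, you set up the explicit indexing $b_1,\ldots,b_{n-j}$ and read off the output times directly; both lead to the same case split and the same interval description of $S$.
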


\begin{proof}  The conclusion of the lemma is clear if $|S| =1$,
so we will assume that $|S| \geq 2$, i.e. that $p_j \geq 2$.

The maximal insertion time $j$ considered in the lemma is either 
a left-time or a right-time for $\chi$.  We will write the proof by 
assuming that $j$ is a left-time (the case of a right-time is 
analogous).  We denote the left-standing of the time $j$ as $q$;
recall from Notation \ref{rem-and-def:4.2} that this amounts to 
$j = m_{\ell} (q)$.  

In view of the above assumptions, the deque-scenario associated to
$( \lambda , \chi )$ has the following feature: in the $j$-th
move of the deque device, the last $p_j$ balls of the input pipe 
(with labels between $1 + \sum_{i=1}^{j-1} p_i$ and $n$) are 
inserted into deque pipe from the left, and during the same move,
the ball $\ban$ goes into the output pipe.  Thus the configuration 
of balls residing in the deque-pipe at time $j$ is
\begin{equation}   \label{fig3}
\mbox{ 
\begin{tabular}{ccccccccc}
\hline
  & $\banminus$  &  $\banminustwo$  &  $\cdots$  & $\bass$ 
  & $\bax$ & $\cdots$ & $\bayy$  &     \\
\hline
\end{tabular} \ \ , }
\end{equation}
where $n' = n-1, n'' = n-2, \ldots , s = 1 + \sum_{i=1}^{j-1} p_i$, 
and where ``$\bax, \ldots ,\bayy$'' is the (possibly empty) 
configuration of balls that were in the deque pipe at time $j-1$.  
Let us also note here that each of the remaining moves of the 
deque device ($(j+1)$-th move up to $n$-th move) is either a 
left-$0$ move or a right-$0$ move, since the input pipe was
emptied at the $j$-th move.

Due to our assumption that $j$ is a left-time, it is certain that 
$V_j \neq \emptyset$ (we have in any case that $V_j \ni q$).  But
$W_j$ may be empty,  and we will discuss separately two cases.

\vspace{6pt}

{\em Case 1.}  $W_j = \emptyset$.

\noindent
In this case all the balls $\banminus , \ldots , \bass$ exit the
deque-pipe by its left side.  Some of the balls 
$\bax , \ldots , \bayy$ may also exit the deque-pipe by its left
side, but they can only do so after all of
$\banminus , \ldots , \bass$ are out of the way.  This immediately
implies that the times when $\banminus , \ldots , \bass$ exit the 
deque-pipe must have consecutive 
\footnote{ Note that the times themselves when 
$\banminus , \ldots , \bass$ exit the deque-pipe don't have to be 
consecutive, because they may 
be interspersed with 
some right-times used by balls from 
$\bax , \ldots , \bayy$.  The ``consecutive'' claim is only in 
reference to left-standings. }
left-standings.   It follows that in this case we have 
$S = V_j = \{ q, q+1, \ldots , q+p_j-1 \}$, and hence $S$ is
an interval-block of $\cstand$.

\vspace{6pt}

{\em Case 2.}  $W_j \neq \emptyset$.

\noindent
In this case some of the balls $\banminus , \ldots , \bass$ 
(at least one and at most $p_j -1$ of them) exit the deque-pipe 
by its right side.  We observe it is not possible to find
$s \leq a < b \leq n-1$ such that the ball $\baa$ exits the 
deque-pipe by its left side while $\bab$ exits by the right-side.
(Indeed, assume by contradiction that this would be the case.
In the picture
\[
\mbox{ 
\begin{tabular}{cccccccccccc}
\hline
  & $\banminus$ & $\cdots$ & $\bab$ & $\cdots$ & $\baa$ 
  & $\cdots$    & $\bass$  & $\bax$ & $\cdots$ & $\bayy$  &     \\
\hline
\end{tabular} \ \ , }
\]
one of the two balls $\baa$, $\bab$ must be the first to 
exit the deque-pipe -- but that's not possible, since the other 
ball will block it.)  As a consequence, there must exist a label
$c \in \{ s, \ldots , n-1 \}$ such that the balls 
$\bass , \ldots , \bac$ (i.e. the balls with labels in 
$[s,c] \cap \bZ$) exit the deque-pipe by the right side, while
the balls with labels in $(c, n-1] \cap \bZ$ (if any) exit by the 
left side.

We next observe that all the balls $\bax , \ldots , \bayy$ from 
the picture in (\ref{fig3}) must exit the deque-pipe by its right
side.  This follows via the same kind of ``blocking'' argument as
in the preceding paragraph.  (Say e.g. that $\bax$ wants to exit 
by the left -- then out of the two balls $\bass$ and $\bax$, none 
can be the first to exit  the deque-pipe, because it would be 
blocked by the other.)

Based on the above tallying of how the balls from the picture in 
(\ref{fig3}) exit the deque-pipe, a moment's thought shows that 
the set $W_j \subseteq \{ 1, \ldots , v \}$ must consist of the 

\noindent
$c-s+1$ largest numbers in $\{ 1, \ldots , v \}$ and that, likewise,
the set $V_j$ must be the sub-interval $\{ q, \ldots , u \}$ of 
$\{ 1, \ldots , u \}$.  Then $(n+1) - W_j$ comes to 
$\{ u+1, \ldots , u+ (c-s+1) \}$, and the union 
$S = V_j \cup ( \, (n+1)- W_j \, )$ is an interval-block of 
$\cstand$, as required.
\end{proof}

$\ $

$\ $

\begin{proposition}   \label{prop:4.6}
Let $n$ be a positive integer, and let $( \lambda , \chi )$ be a 
pair in $\Luk (n) \times \{ \ell, r \}^n$.  The combined-standings
partition $\cstand$ introduced in Definition \ref{def:4.3} is in 
$NC(n)$.
\end{proposition}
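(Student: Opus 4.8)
The plan is to argue by strong induction on $n$, with Lemma \ref{lemma:4.5} as the engine of the inductive step. The case $n=1$ is trivial. For $n\ge 2$, recall that the blocks of $\cstand$ are indexed by the set $I$ of insertion times, and put $j:=\max(I)$. If $j=1$, then $I=\{1\}$, so $p_1=n$ and $\cstand=1_n\in NC(n)$; hence I may assume $j\ge 2$. Since $1\in I$ always (one has $p_1\ge 1$), this gives $1\le n-p_j=\sum_{i<j}p_i<n$. Let $S:=V_j\cup\bigl((n+1)-W_j\bigr)$ be the block of $\cstand$ indexed by $j$; by Lemma \ref{lemma:4.5} it is an interval-block, and $|S|=|T_j|=p_j$.

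The heart of the argument is a reduction that removes the ``last'' insertion. In the deque-scenario determined by $(\lambda,\chi)$, I would suppress the entire $j$-th insertion: the $p_j$ balls with labels $n-p_j+1,\dots,n$, which enter the deque pipe at time $t=j$, are never inserted, and all the moves performed at the times in $T_j$ are deleted. Note that every $t\in T_j\setminus\{j\}$ has $p_t=0$ (else $t$ would be the minimum of its own block of the output-time partition, forcing that block to be $T_j$ and $t=j$), so the deleted moves are the $j$-th move together with $p_j-1$ subsequent $0$-moves, each of which touches only balls of the $j$-th group. A short check then shows that what remains is a legitimate deque-scenario on the $n-p_j$ surviving balls $1,\dots,n-p_j$: deleting the $j$-th group of balls does not alter the left-to-right order of the others in the deque pipe, and at each surviving move the ball that must be popped is still at the correct end of the (now smaller) deque pipe. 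After the obvious order-preserving relabelling of the surviving times, this reduced scenario is the one determined by a pair $(\lambda',\chi')\in\Luk(n-p_j)\times\{\ell,r\}^{n-p_j}$, where $\chi'$ is obtained from $\chi$ by deleting the entries in the positions belonging to $T_j$.

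Next I would verify that the partition $\rho_{\lambda',\chi'}\in\cP(n-p_j)$ is precisely the image of $\cstand\setminus\{S\}$ under the order-preserving bijection $\{1,\dots,n\}\setminus S\to\{1,\dots,n-p_j\}$. This is a bookkeeping matter: passing to the reduced scenario deletes the block $T_j$ from the output-time partition (relabelling the other blocks order-preservingly), and deletes from the lists of left- and right-times of $\chi$ exactly those entries lying in $T_j$; one then checks, unwinding Definitions \ref{def:4.1} and \ref{def:4.3}, that forming left-standings, right-standings and the combined-standings partition out of these reduced data reproduces $\cstand$ with its block $S=V_j\cup((n+1)-W_j)$ deleted. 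It is essential here that $S$ is an interval, so that $\{1,\dots,n\}\setminus S$ is an initial sub-interval followed by a final one; that is what makes the order-preserving relabellings on the left-standings coordinates $\{1,\dots,u\}$ and on the reflected right-standings coordinates $\{u+1,\dots,n\}$ fit together into the single bijection above.

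Granting this, the inductive hypothesis yields $\rho_{\lambda',\chi'}\in NC(n-p_j)$, and therefore $\cstand$ is obtained from a non-crossing partition by re-inserting, in an order-preserving way, the interval-block $S$. Since an interval-block can never be one of the two blocks witnessing a crossing, this operation preserves non-crossingness, so $\cstand\in NC(n)$. The genuinely load-bearing and delicate point is the reduction of the previous two paragraphs — that suppressing the $j$-th insertion does leave a valid deque-scenario, and that its combined-standings partition is exactly $\cstand$ with the interval-block $S$ removed. The remaining ingredients (Lemma \ref{lemma:4.5}, the two base cases, and the stability of non-crossingness under insertion of an interval-block) are routine.
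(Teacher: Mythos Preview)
Your proposal is correct and follows essentially the same approach as the paper: induction on $n$, removing the block of $\cstand$ indexed by the maximal insertion time $j$ (which is an interval-block by Lemma \ref{lemma:4.5}), identifying what remains with the combined-standings partition of a reduced pair $(\lambda',\chi')=(\lambda_o,\chi_o)$ on $n-p_j$ letters, and then invoking the induction hypothesis together with the fact that inserting an interval-block into a non-crossing partition preserves non-crossingness. The paper handles the degenerate case as $|I|=1$ rather than $j=1$, but these are equivalent, and it is somewhat terser about the bookkeeping you flag as delicate, but the argument is the same.
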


\begin{proof}  We proceed by induction on $n$.  The base case 
$n=1$ is clear, so we focus on the induction step: we fix an integer
$n \geq 2$, we assume the statement of the proposition holds for 
pairs in $\Luk (m) \times \{ \ell, r \}^m$ whenever 
$1 \leq m \leq n-1$, and we prove that it also holds for pairs in 
$\Luk (n) \times \{ \ell, r \}^n$.

Let us then fix a pair $( \lambda , \chi )$ in 
$\Luk (n) \times \{ \ell, r \}^n$, for which we will prove that 
$\cstand$ is in $NC(n)$.  We denote $\chi = ( h_1, \ldots , h_n )$,
and we denote the rise-vector of $\lambda$ as
$\vec{\lambda} =$

\noindent
$( p_1 -1, \ldots , p_n -1)$.
Besides $\cstand$, we will also work with 
the output-time partition $\pi \in \cP (n)$ associated to 
$( \lambda , \chi )$, and we will use the same notations as 
earlier in the section:
\[
\cstand = \{ V_i \cup (n+1) - W_i \mid i \in I \}
\mbox{ and }
\pi = \{ T_i \mid i \in I \} ,
\]
where $I = \{ 1 \leq i \leq n \mid p_i > 0 \}$, 
the set of insertion times for $( \lambda , \chi )$.
We will assume that $|I| \geq 2$ (if $|I| =1$ then clearly 
$\cstand = 1_n \in NC(n)$).  Same as in Lemma \ref{lemma:4.5},
we put $j := \max (I)$;  we thus have $p_j \geq 1$ and 
$p_{j+1} = \cdots = p_n = 0$. 

Let us put $m := n-p_j = \sum_{i=1}^{j-1} p_i$.  Then $m > 0$ 
(because the assumption $|I| \geq 2$ means there exists $i<j$ 
with $p_i > 0$), and also $m < n$ (since $p_j > 0$).  
We consider the $m$-tuple
\[
\chi_o := \chi \mid ( \, \{ 1, \ldots , n \} \setminus T_j \, )
\in \{ \ell , r \}^m
\]
(that is, $\chi_o = ( h_{t_1}, \ldots , h_{t_m} )$, where one
writes 
$\{ 1, \ldots , n \} \setminus T_j = \{ t_1, \ldots , t_m \}$
with $t_1 < \cdots < t_m$).  On the other hand, let us consider
the Lukasiewicz path $\lambda_o \in \Luk (m)$
determined by the requirement that
\[
\vec{\lambda_o} = 
( p_1 -1, \ldots , p_n -1) \mid 
( \, \{ 1, \ldots , n \} \setminus T_j \, ).
\]
It is easily seen that the combined-standings partition 
$\rho_{\lambda_o , \chi_o} \in \cP (m)$ associated to 
$( \lambda_o , \chi_o )$ is obtained from $\cstand \in \cP (n)$ 
by removing the block $V_j \cup ( \, (n+1) - W_j \, )$ of $\cstand$,
and then by re-naming the elements of the remaining blocks of 
$\cstand$ in increasing order.  (Indeed, for this verification all 
one needs to do is ignore the last group of $p_j$ balls which
moves through the pipes of the deque device, in the deque-scenario
determined by $( \lambda , \chi )$.)

Now, the block $V_j \cup ( \, (n+1) - W_j \, )$ removed out of 
$\cstand$ is an interval-block, by Lemma \ref{lemma:4.5}.  On the 
other hand, the partition $\rho_{\lambda_o , \chi_o}$ is 
in $NC(m)$, due to our induction hypothesis.  Thus the partition 
$\cstand \in \cP (n)$ is obtained via the insertion of an 
interval-block with $p_j (= n-m)$ elements into a partition from 
$NC(m)$.  This way of looking at $\cstand$ readily implies that 
$\cstand \in NC(n)$, and concludes the proof.
\end{proof}

$\ $

$\ $

It is now easy to prove the main result of this section, which 
is stated as follows.

$\ $

\begin{theorem}   \label{thm:4.9}
Let $\chi$ be a tuple in $\{ \ell, r \}^n$, and let the set of 
partitions $\cPchi (n) \subseteq \cP (n)$ be as in Definition 
\ref{def:3.5}.  Then $\cPchi (n)$ can also be obtained as
\begin{equation}   \label{eqn:49a}
\cPchi (n) = \bigl\{
\xtau_{ { }_{\chi} } \cdot \pi \mid \pi \in NC(n)
\bigr\} \subseteq \cP (n),
\end{equation}
with $\xtau_{ { }_{\chi} }$ as in Definition \ref{def:4.7}.
\end{theorem}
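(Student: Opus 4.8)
The plan is to deduce Theorem~\ref{thm:4.9} directly from the combinatorial machinery already assembled in this section, by matching up the two descriptions of $\cPchi(n)$ through the bijections with $\Luk(n)$ and $NC(n)$. Recall that $\cPchi(n)$ is, by Definition~\ref{def:3.5}, the image of $\Phi_\chi:\Luk(n)\to\cP(n)$, and that by Proposition~\ref{prop:3.6} this $\Phi_\chi$ is a bijection onto $\cPchi(n)$ whose inverse is the restriction of the canonical surjection $\Psi$. On the other side, $\Phi:\Luk(n)\to NC(n)$ (from Remark~\ref{rem-and-def:2.3}.3) is a bijection with inverse $\Psi|_{NC(n)}$.

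The key step is to show that the square relating these maps commutes up to the action of $\xtau_{ { }_\chi}$; precisely, that for every $\lambda\in\Luk(n)$ one has
\begin{equation}   \label{eqn:plan-square}
\Phi_\chi(\lambda) = \xtau_{ { }_{\chi}} \cdot \Phi(\lambda).
\end{equation}
Granting this, the theorem follows immediately: as $\lambda$ ranges over $\Luk(n)$, the right-hand side of \eqref{eqn:plan-square} ranges over $\{\xtau_{ { }_{\chi}}\cdot\pi \mid \pi\in NC(n)\}$ (since $\Phi$ is onto $NC(n)$), while the left-hand side ranges over $\cPchi(n)$ by Definition~\ref{def:3.5}; hence the two sets coincide, which is exactly \eqref{eqn:49a}.

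To prove \eqref{eqn:plan-square}, I would argue as follows. Fix $\lambda\in\Luk(n)$, write $\vec\lambda=(p_1-1,\dots,p_n-1)$, and let $\pi=\Phi_\chi(\lambda)$ be the output-time partition associated to $(\lambda,\chi)$, with blocks $\{T_i\mid i\in I\}$ as in Definition~\ref{def-and-rem:3.3} and Notation~\ref{rem-and-def:4.2}. Let $\cstand$ be the combined-standings partition of Definition~\ref{def:4.3}. Lemma~\ref{lemma:4.8} gives $\xtau_{ { }_{\chi}}\cdot\cstand=\pi$, so it suffices to show that $\cstand=\Phi(\lambda)$, i.e.\ that $\cstand$ is the unique non-crossing partition with $\Psi(\cstand)=\lambda$. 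Now Proposition~\ref{prop:4.6} already tells us $\cstand\in NC(n)$, so by the bijectivity of $\Phi$ it remains only to check the single identity $\Psi(\cstand)=\lambda$. For this I would compute the rise-vector of $\Psi(\cstand)$ from the recipe \eqref{eqn:23a}: the blocks of $\cstand$ are the sets $V_i\cup((n+1)-W_i)$ indexed by $i\in I$, each of cardinality $|V_i|+|W_i|=|T_i|=p_i$, and one needs that the minimum of the block $V_i\cup((n+1)-W_i)$ is exactly $i$. This last point is the only place where any real checking is required, and it is the step I expect to be the main (though modest) obstacle: one has to trace through the definitions of $V_i$ and $W_i$ in \eqref{eqn:42c} and of the shift $q\mapsto n+1-q$ in \eqref{eqn:43a} and confirm that the smallest index appearing across $V_i$ and $(n+1)-W_i$ equals the insertion time $i$ — equivalently, that $i\in I\subseteq\{1,\dots,n\}$ labels the blocks of $\cstand$ in the same order in which it labels the blocks $T_i$ of $\pi$, so that the $q$-values extracted via \eqref{eqn:23a} assemble into the tuple $(p_1-1,\dots,p_n-1)=\vec\lambda$. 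Once the minima match up, $\Psi(\cstand)$ has rise-vector $\vec\lambda$, hence equals $\lambda$, giving $\cstand=\Phi(\lambda)$ and thus \eqref{eqn:plan-square}; the theorem follows. Alternatively, if pinning down the minima turns out to be delicate, one can sidestep it: $\cstand\in NC(n)$ already has $|I|$ blocks of sizes $p_i$ with $i\in I$, and a direct count shows there is a unique element of $NC(n)$ with that block-size data positioned so that $\xtau_{ { }_{\chi}}$ carries it to $\pi$, which must then be $\cstand$; combined with $\xtau_{ { }_{\chi}}\cdot\cstand=\pi$ this again yields $\cstand=\Phi(\lambda)$, since $\Phi(\lambda)$ is the unique non-crossing preimage of $\lambda$ under $\Psi$ and $\Psi$ depends only on block minima and sizes.
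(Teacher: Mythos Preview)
Your key identity \eqref{eqn:plan-square}, namely $\Phi_\chi(\lambda) = \xtau_{ { }_{\chi}} \cdot \Phi(\lambda)$, is false in general, and with it the claim $\cstand = \Phi(\lambda)$. Take the paper's own running Example~\ref{example:4.11}: $n=5$, $\chi = (r,\ell,\ell,r,\ell)$, $\vec\lambda = (2,-1,1,-1,-1)$. There $\cstand = \{\{1,4,5\},\{2,3\}\}$, whereas $\Phi(\lambda) = \{\{1,2,5\},\{3,4\}\}$ (the unique non-crossing partition with block-minima $1,3$ of sizes $3,2$). The block of $\cstand$ indexed by the insertion time $i=3$ is $\{2,3\}$, whose minimum is $2$, not $3$; so the step you flagged as a ``modest obstacle'' --- that $\min\bigl(V_i\cup((n+1)-W_i)\bigr)=i$ --- actually fails. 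Equivalently, $\Psi(\cstand)$ has rise-vector $(2,1,-1,-1,-1) \neq \vec\lambda$. Your alternative argument does not repair this: knowing that $\cstand$ is the unique partition sent by $\xtau_{ { }_{\chi}}$ to $\pi$ says nothing about $\Psi(\cstand)$, so it gives no route to $\cstand=\Phi(\lambda)$.

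The fix is to drop the pointwise identity \eqref{eqn:plan-square} entirely and argue set-theoretically. Lemma~\ref{lemma:4.8} and Proposition~\ref{prop:4.6} already give $\xtau_{ { }_{\chi}}^{-1}\cdot\pi = \cstand \in NC(n)$ for every $\pi = \Phi_\chi(\lambda) \in \cPchi(n)$, i.e.\ $\{\xtau_{ { }_{\chi}}^{-1}\cdot\pi \mid \pi\in\cPchi(n)\}\subseteq NC(n)$. Since $|\cPchi(n)| = |\Luk(n)| = C_n = |NC(n)|$ (Proposition~\ref{prop:3.6} and Remark~\ref{rem-and-def:2.3}.3) and $\pi \mapsto \xtau_{ { }_{\chi}}^{-1}\cdot\pi$ is injective on $\cP(n)$, this inclusion is an equality, which rearranges to \eqref{eqn:49a}. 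The map $\lambda\mapsto\cstand$ from $\Luk(n)$ to $NC(n)$ is indeed a bijection, but it is \emph{not} the canonical bijection $\Phi$; fortunately, the theorem never needed it to be.
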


\begin{proof}  
We will show, equivalently, that 
$\bigl\{ \xtau_{ { }_{\chi} }^{-1} \cdot \pi 
 \mid \pi \in \cPchi (n) \} \ = \ NC(n)$.
Since on both sides of the latter equality we have sets of the 
same cardinality $C_n$, it suffices to verify the inclusion 
``$\subseteq$''.  But ``$\subseteq$'' is clear from Lemma 
\ref{lemma:4.8} and Proposition \ref{prop:4.6}, since for 
$\pi = \Phi_{\chi} ( \lambda )$ with $\lambda \in \Luk (n)$ we 
get $\xtau_{ { }_{\chi} }^{-1} \cdot \pi = \cstand \in NC(n)$.
\end{proof}

$\ $

$\ $

\begin{corollary}    \label{cor:4.10}
Let $n$ be a positive integer and let $\chi$ be a tuple in 
$\{ \ell , r \}^n$.

\vspace{6pt}

$1^o$ $\cPchi (n)$ contains the partitions $0_n$ and $1_n$ (from 
Notation \ref{def:2.1}.2), and also contains all the 
partitions $\pi \in \cP (n)$ which have $n-1$ blocks.

\vspace{6pt}

$2^o$ The bijection
$NC(n) \ni \pi \mapsto \xtau_{ { }_{\chi} } \cdot \pi
\in \cPchi (n)$ from Theorem \ref{thm:4.9} is a poset isomorphism, 
where on both $NC(n)$ and $\cPchi (n)$ we consider the partial 
order ``$\leq$'' defined by reverse refinement.  

\vspace{6pt}

$3^o$ $( \cPchi (n) , \leq )$ is a lattice.  The meet operation 
``$\wedge$'' of $\cPchi (n)$ is described via block-intersections 
-- the blocks of $\pi_1 \wedge \pi_2$ are non-empty intersections 
$V_1 \cap V_2$, with $V_1 \in \pi_1$ and $V_2 \in \pi_2$.
\end{corollary}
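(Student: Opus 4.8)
\textbf{Plan for the proof of Corollary \ref{cor:4.10}.}
The whole corollary rests on the bijection $NC(n)\ni\pi\mapsto\xtau_{ { }_{\chi} }\cdot\pi\in\cPchi (n)$ of Theorem \ref{thm:4.9}, so the plan is to read off each assertion as a direct transfer of a known fact about $NC(n)$ through this bijection. For part $1^o$, I would first note that $0_n$ and $1_n$ are both fixed by the action of any permutation on $\cP (n)$ (a singleton goes to a singleton, and the full set goes to the full set), so since $0_n,1_n\in NC(n)$ we get $0_n=\xtau_{ { }_{\chi} }\cdot 0_n\in\cPchi (n)$ and similarly for $1_n$. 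For the partitions with $n-1$ blocks: such a $\pi$ has exactly one block of size $2$ and $n-2$ singletons, hence is automatically non-crossing, so $\pi\in NC(n)$; and applying $\xtau_{ { }_{\chi} }$ to it produces again a partition with one block of size $2$ and $n-2$ singletons, i.e. again a partition with $n-1$ blocks. Conversely every partition of $\cP (n)$ with $n-1$ blocks has this shape, and (since $\xtau_{ { }_{\chi} }$ is a bijection of $\{1,\ldots,n\}$) is the image of the size-$2$-block partition obtained by pulling its unique doubleton back through $\xtau_{ { }_{\chi} }$; that preimage is non-crossing, so lies in $NC(n)$, and hence the given partition lies in $\cPchi (n)$. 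This shows \emph{all} partitions with $n-1$ blocks are in $\cPchi (n)$.

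For part $2^o$, the key observation is that the reverse-refinement order on $\cP (n)$ is preserved by the action of any permutation $\xtau$: if $\pi\leq\rho$ then for every $V\in\pi$ there is $W\in\rho$ with $V\subseteq W$, whence $\xtau (V)\subseteq\xtau (W)$, giving $\xtau\cdot\pi\leq\xtau\cdot\rho$; applying the same to $\xtau^{-1}$ gives the reverse implication, so $\xtau$ acts as a poset automorphism of $(\cP (n),\leq)$. Restricting this automorphism to $NC(n)$, Theorem \ref{thm:4.9} tells us its image is exactly $\cPchi (n)$, so the restriction is a poset isomorphism from $(NC(n),\leq)$ onto $(\cPchi (n),\leq)$.

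For part $3^o$, the fact that $(\cPchi (n),\leq)$ is a lattice is immediate from $2^o$ together with the classical fact (recalled in Lectures 9--10 of \cite{NS2006}) that $(NC(n),\leq)$ is a lattice: a poset isomorphic to a lattice is a lattice, with the lattice operations transported by the isomorphism. The only thing requiring a small argument is the concrete description of the meet: I would argue that for $\pi_1,\pi_2\in\cPchi (n)$, the partition $\pi_1\wedge\pi_2$ whose blocks are the non-empty intersections $V_1\cap V_2$ (with $V_i\in\pi_i$) is exactly the meet. This is the well-known formula for the meet in $(\cP (n),\leq)$, and it is clearly $\leq$ both $\pi_1$ and $\pi_2$ and is the largest such partition in $\cP (n)$; since $\cPchi (n)$ is a sub-poset of $\cP (n)$ that is itself a lattice, I only need that this $\cP (n)$-meet actually lies in $\cPchi (n)$. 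To see that, pull back through the isomorphism of $2^o$: $\xtau_{ { }_{\chi} }^{-1}\cdot(\pi_1\wedge\pi_2)$ has blocks $\xtau_{ { }_{\chi} }^{-1}(V_1\cap V_2)=\xtau_{ { }_{\chi} }^{-1}(V_1)\cap\xtau_{ { }_{\chi} }^{-1}(V_2)$, i.e. it is the $\cP (n)$-meet of $\xtau_{ { }_{\chi} }^{-1}\cdot\pi_1$ and $\xtau_{ { }_{\chi} }^{-1}\cdot\pi_2$, both of which lie in $NC(n)$; since $NC(n)$ is known to be closed under taking $\cP (n)$-meets (its meet coincides with the meet in $\cP (n)$), this preimage is in $NC(n)$, hence $\pi_1\wedge\pi_2\in\cPchi (n)$. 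The main (mild) obstacle is precisely this last point — verifying that the naive block-intersection partition stays inside $\cPchi (n)$ — and the clean way around it is to do the verification on the $NC(n)$ side, where closure under intersections is standard, rather than directly inside $\cPchi (n)$. Note by contrast that the join in $\cPchi (n)$ will \emph{not} in general be given by the $\cP (n)$-join, just as the join in $NC(n)$ differs from the $\cP (n)$-join; this is why only the meet is singled out in the statement.
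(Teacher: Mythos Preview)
Your proof is correct and follows essentially the same approach as the paper's own proof: in each part you transfer the corresponding known fact about $NC(n)$ through the bijection $\pi\mapsto\xtau_{ { }_{\chi} }\cdot\pi$ of Theorem \ref{thm:4.9}, using that permutation actions on $\cP(n)$ fix $0_n$, $1_n$, the set of $(n-1)$-block partitions, the refinement order, and block-intersections. The paper's argument is terser (it simply notes invariance of the relevant set in $1^o$, order-preservation in $2^o$, and that $\xtau_{ { }_{\chi} }$ respects block-intersections in $3^o$), but the content is the same; your additional care in $3^o$ to check that the $\cP(n)$-meet lands in $\cPchi(n)$ by pulling back to $NC(n)$ is exactly the unpacking of the paper's one-line justification.
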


\begin{proof}
$1^o$ This follows from the fact that the set
$\{ 0_n , 1_n \} \cup 
\{ \pi \in \cP (n) \mid \pi \mbox{ has $n-1$ blocks} \}$
is contained in $NC(n)$ and is
sent into itself by the action of $\xtau_{ { }_{\chi} }$ (no 
matter what the permutation $\xtau_{ { }_{\chi} }$ is).

$2^o$ This is an immediate consequence of the observation that the 
partial order by reverse refinement is preserved by the action 
of either $\xtau_{ { }_{\chi} }$ or $\xtau_{ { }_{\chi} }^{-1}$.  

$3^o$ The fact that $( \cPchi (n) , \leq )$ is a lattice follows 
from $2^o$, since $( NC(n) , \leq )$ is a lattice.  The description 
of the meet operation of $\cPchi (n)$ holds because the meet 
operation of $NC(n)$ is given by block-intersections, and because 
the action of $\xtau_{ { }_{\chi} }$ on partitions respects 
block-intersections.
\end{proof}

$\ $

$\ $

\begin{remark}    \label{rem:4.11}
$1^o$ For every positive integer $n$, the permutations associated 
to the $( \ell , r )$-words $( \ell , \ldots , \ell )$ and 
$(r, \ldots , r )$ are 
\[
\xtau_{ { }_{ ( \ell , \ldots , \ell ) } } := \left(
\begin{array}{cccccc}
 1  &  2  &  \cdots  &  n    \\
 1  &  2  &  \cdots  &  n  
\end{array}  \right) , \ \ 
\xtau_{ { }_{ ( r , \ldots , r ) } } := \left(
\begin{array}{cccccc}
 1  &   2   &  \cdots  &  n    \\
 n  &  n-1  &  \cdots  &  1  
\end{array}  \right) .
\]
When plugged into Theorem \ref{thm:4.9}, this gives
$\cP^{ ( \ell, \ldots , \ell )} (n) =
\cP^{ ( r, \ldots , r )} (n) = NC(n)$, 
a fact that had already been noticed in Remark \ref{rem:3.7}.2.

\vspace{6pt}

$2^o$ Say that $n=4$ and that $\chi = ( \ell , r , \ell , r )$,
with associated partition
\[
\xtau_{ { }_{\chi} } =  \left(
\begin{array}{cccc}
 1  &  2  &  3  &  4    \\
 1  &  3  &  4  &  2
\end{array}  \right) .
\]
Theorem \ref{thm:4.9} gives, via an easy calculation,
that $\cPchi (4)$ contains all the 
partitions of $\{ 1,2,3,4 \}$ with the exception of 
$\{ \ \{ 1,4 \} \, , \, \{ 2,3 \} \ \}$ (in agreement with the
description of this particular $\cPchi (n)$ that was mentioned 
in section 1.2 of the introduction).
\end{remark}

$\ $

$\ $

In the sequel there will be instances when we will need to 
``read in reverse" a tuple from $\{ \ell , r \}^n$.  We conclude 
the section with an observation about that.

$\ $

\begin{definition}   \label{def:4.12}
For every $n \geq 1$ and 
$\chi = ( h_1, \ldots , h_n ) \in \{ \ell , r \}^n$, the tuple
\[
\chiopp := ( h_n, \ldots , h_1 )
\]
will be called the {\em opposite} of $\chi$.
\end{definition}

$\ $

\begin{proposition}   \label{prop:4.13}
Let $n$ be a positive integer, let $\chi$ be a tuple in 
$\{ \ell , r \}^n$, and consider the opposite tuple $\chiopp$.  
Then the sets of partitions $\cPchi (n)$ and 
$\cPchiopp (n)$ are related by the formula
\begin{equation}   \label{eqn:413a}
\cPchiopp (n) = \{ \piopp \mid \pi \in \cPchi (n) \} ,
\end{equation}
where the opposite $\piopp$ of a partition $\pi \in \cP (n)$ 
is as considered in Definition \ref{def:2.1}.4.
\end{proposition}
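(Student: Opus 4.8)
The plan is to deduce (\ref{eqn:413a}) from the alternative description of $\cPchi (n)$ established in Theorem \ref{thm:4.9}; arguing directly with deque-scenarios looks unpromising, since the time-reversal of a deque-scenario is not itself a deque-scenario. By Theorem \ref{thm:4.9}, $\cPchi (n) = \{ \xtau_{ { }_{\chi} } \cdot \pi \mid \pi \in NC(n) \}$, and the same theorem applied to $\chiopp$ gives $\cPchiopp (n) = \{ \xtau_{ { }_{\chiopp} } \cdot \pi \mid \pi \in NC(n) \}$. On the other hand, since $\piopp = \tau_o \cdot \pi$ (Definition \ref{def:2.1}.4) and $\pi \mapsto \tau \cdot \pi$ is a group action, the right-hand side of (\ref{eqn:413a}) equals $\{ ( \tau_o \, \xtau_{ { }_{\chi} } ) \cdot \pi \mid \pi \in NC(n) \}$. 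Thus (\ref{eqn:413a}) is equivalent to
\[
\{ \xtau_{ { }_{\chiopp} } \cdot \pi \mid \pi \in NC(n) \}
= \{ ( \tau_o \, \xtau_{ { }_{\chi} } ) \cdot \pi \mid \pi \in NC(n) \},
\]
and this will follow at once if the permutation $\xtau_{ { }_{\chiopp} }^{-1} \, \tau_o \, \xtau_{ { }_{\chi} }$ is shown to leave $NC(n)$ invariant.

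The next step is to identify that permutation explicitly. Recording the occurrences of $\ell$ and $r$ in $\chi$ as in (\ref{eqn:41a}), one checks from Definition \ref{def:4.12} that in $\chiopp$ the occurrences of $\ell$ sit at the positions $n+1 - m_{\ell}(u+1-b)$ ($1 \leq b \leq u$) and those of $r$ at the positions $n+1 - m_r(v+1-b)$ ($1 \leq b \leq v$). Feeding this into the two-line formula (\ref{eqn:47a}) for $\xtau_{ { }_{\chiopp} }$ and composing with $\tau_o$, a short computation yields
\[
\tau_o \, \xtau_{ { }_{\chiopp} } \; = \; \xtau_{ { }_{\chi} } \cdot \theta ,
\]
where $\theta$ is the involution of $\{ 1, \ldots, n \}$ that reverses the block $\{ 1, \ldots, u \}$ and, independently, reverses the block $\{ u+1, \ldots, n \}$. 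Hence $\xtau_{ { }_{\chiopp} }^{-1} \, \tau_o \, \xtau_{ { }_{\chi} } = \theta^{-1} = \theta$, and the task is reduced to checking that $\theta \cdot NC(n) = NC(n)$.

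For this last point I would observe that $\theta = \gamma \circ \tau_o$, where $\gamma$ is the cyclic rotation $i \mapsto i + u$ (mod $n$) of $\{ 1, \ldots, n \}$; indeed $\theta ( \tau_o ( i ) ) = u + i$ for $i \leq v$ and $\theta ( \tau_o ( i ) ) = i - v$ for $i > v$, which is precisely this rotation. Now $\tau_o$ preserves $NC(n)$ by the manifestly symmetric form of the non-crossing condition in Definition \ref{def:2.1}.5, and every cyclic rotation preserves $NC(n)$ -- this being the standard fact that $NC(n)$ coincides with the set of partitions of $\{ 1, \ldots, n \}$ that are non-crossing when the points are arranged on a circle, so that $NC(n)$ carries the circle's dihedral symmetry (cf. Lectures 9--10 of \cite{NS2006}); if one wants to stay self-contained, the rotation case follows from a one-line pull-back argument showing that a linear crossing $a < b < c < d$ of $\gamma \cdot \pi$ forces a linear crossing of $\pi$, the only case needing attention being $a = 1$, which produces the crossing $b-1 < c-1 < d-1 < n$ in $\pi$. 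Therefore $\theta$ preserves $NC(n)$, which completes the argument.

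The one step that genuinely demands care is the index bookkeeping in the middle paragraph: correctly passing from the occurrence-lists of $\chi$ to those of $\chiopp$, and then verifying $\tau_o \, \xtau_{ { }_{\chiopp} } = \xtau_{ { }_{\chi} } \cdot \theta$. Everything around it is either a direct appeal to Theorem \ref{thm:4.9} or a standard invariance property of $NC(n)$. A quick sanity check on the running example $\chi = ( r, \ell, \ell, r, \ell )$ (so $u = 3$ and $\chiopp = ( \ell, r, \ell, \ell, r )$) gives $\theta = (1\,3)(4\,5)$ and $\gamma$ the rotation by $3 \equiv -2$ modulo $5$, and one verifies $\xtau_{ { }_{\chiopp} } = \tau_o \, \xtau_{ { }_{\chi} } \, \theta$ as predicted.
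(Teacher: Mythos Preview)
Your argument is correct and follows essentially the same route as the paper: both proofs invoke Theorem \ref{thm:4.9}, establish the permutation identity $\xtau_{ { }_{\chiopp} } = \tau_o \, \xtau_{ { }_{\chi} } \, \theta$ (the paper writes $\theta$ as $\tau_u$, defined by exactly the same two-block reversal in (\ref{eqn:413b})), and then reduce everything to the fact that $\theta$ preserves $NC(n)$. The only notable difference is in that last step: the paper verifies invariance by a direct crossing-preservation argument left partly to the reader, whereas you factor $\theta$ as a cyclic rotation composed with $\tau_o$ and appeal to the dihedral symmetry of $NC(n)$ --- a cleaner and more conceptual justification.
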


\begin{proof}  Let $\tau_o$ be the order-reversing 
permutation of $\{ 1, \ldots , n \}$ that was considered in 
Definition \ref{def:2.1}.4 ($\tau_o (m) = n+1-m$ for 
$1 \leq m \leq n$).  On the other hand let 
$u \in \{ 0, 1, \ldots , n \}$ be the number of occurrences of 
the letter $\ell$ in the word $\chi$, and let us consider the 
permutation
\begin{equation}   \label{eqn:413b}
\tau_u := \left(
\begin{array}{cccccccc}
1 &  2  & \cdots & u & u+1 & \cdots & n-1 &  n    \\
u & u-1 & \cdots & 1 &  n  & \cdots & u+2 & u+1 
\end{array}  \right).
\end{equation}
(Note that if $u$ happens to be $0$, then the permutation 
$\tau_0$ defined in (\ref{eqn:413b}) coincides, fortunately,
with the permutation $\tau_o$ that had been considered above.)

Let $\pi$ be a partition in $\cP (n) \setminus NC(n)$.  Let 
$V, W$ be two distinct blocks of $\pi$ which cross, and let 
$a < b < c < d$ be numbers such that $a,c \in V$ and $b,d \in W$.  
We leave it as an exercise to the reader to check via a 
case-by-case discussion that the numbers 
\[
\tau_u (a), \, \tau_u (b), \, \tau_u (c), \,
\tau_u (d) \in \{ 1, \ldots , n \}
\]
(despite not being necessarily in increasing order) 
ensure the existence of a crossing between the blocks 
$\tau_u (V)$ and $\tau_u (W)$ of the partition 
$\tau_u \cdot \pi \in \cP (n)$.

The argument in the preceding paragraph shows that 
$\{ \tau_u \cdot \pi \mid \pi \in \cP (n) \setminus NC(n) \}
\subseteq$
$\cP (n) \setminus NC(n)$.
A cardinality argument forces the latter inclusion to be an 
equality, and then from the fact that $\tau_u$ sends
$\cP (n)$ bijectively onto itself it also follows that we have
\begin{equation}   \label{eqn:413c}
\{ \tau_u \cdot \pi \mid \pi \in NC(n) \} \ = \ NC(n).
\end{equation}

Now let us consider the positions of the letters $\ell$ and $r$
in the words $\chi$ and $\chiopp$.  By tallying these positions 
and plugging them into the formulas for the permutations 
$\xtau_{ { }_{\chi} }$ and $\xtau_{ { }_{\chiopp} }$ (as in 
Definition \ref{def:4.7}), one immediately finds that 
\begin{equation}   \label{eqn:413d}
\xtau_{ { }_{\chiopp} } = 
\tau_o  \, \xtau_{ { }_{\chi} } \, \tau_u .
\end{equation}
So then we can write:
\begin{align*} 
\cPchiopp (n)
& = \{ \xtau_{ { }_{\chiopp} } \cdot \pi \mid \pi \in NC(n) \}      
    \ \ \mbox{ (by Theorem \ref{thm:4.9}) }                        \\
& = \{ \tau_o \xtau_{ { }_{\chi} } \tau_u \cdot \pi 
    \mid \pi \in NC(n) \}  \ \ \mbox{ (by Eqn.(\ref{eqn:413d}) }   \\
& = \{ \tau_o \xtau_{ { }_{\chi} } \cdot \pi '
    \mid \pi ' \in NC(n) \}  \ \ \mbox{ (by Eqn.(\ref{eqn:413c}) } \\
& = \{ \tau_o \cdot \pi '' \mid \pi '' \in \cPchi (n)) \}  
    \ \ \mbox{ (by Theorem \ref{thm:4.9}), } 
\end{align*}
and this establishes the required formula (\ref{eqn:413a}).
\end{proof}

\vspace{2cm}

\begin{center}
{\bf\large 5. \boldmath{$( \ell , r )$}-cumulant functionals}
\end{center}
\setcounter{section}{5}
\setcounter{equation}{0}
\setcounter{theorem}{0}

In this section we introduce the family of $( \ell , r )$-cumulant 
functionals associated to a noncommutative probability space.
In order to write in a more compressed way the summation formula 
defining these functionals, we first introduce a notation.

$\ $

\begin{notation}    \label{def:5.1}
{\em  [Restrictions of $n$-tuples.] }

\noindent
Let $\cX$ be a non-empty set, let $n$ be a positive integer,
and let $(x_1, \ldots , x_n)$ be an $n$-tuple in $\cX^n$.  For a 
subset $V = \{ i_1, \ldots , i_m \} \subseteq \{ 1, \ldots , n \}$,
with $1 \leq m \leq n$ and $1 \leq i_1 < \cdots < i_m \leq n$,
we will denote
\[
( x_1, \ldots , x_n ) \mid V :=
( x_{i_1}, \ldots , x_{i_m} ) \in \cX^m .
\]
The next definition uses this notation in two ways:

\vspace{6pt}

$\bullet$ for $\cX = \cA$ (algebra of noncommutative random 
variables);

\vspace{6pt}

$\bullet$ for $\cX = \{ \ell ,r \}$, when we talk about the 
restriction $\chi \mid V$ of a tuple $\chi \in \{ \ell , r \}^n$.
\end{notation}

$\ $

$\ $

\begin{prop-and-def}   \label{prop-and-def:5.2}
[$( \ell , r)$-cumulants.]

\noindent
Let $( \cA , \varphi )$ be a nocommutative probability space.  
There exists a family of multilinear functionals
\[
\Bigl( \,  \kappa_{\chi} : \cA^n \to \bC \, \Bigr)_{ n \geq 1,
                                \,  \chi \in \{ \ell , r \}^n } 
\]
which is uniquely determined by the requirement that 
\begin{equation}  \label{eqn:52a}
\left\{   \begin{array}{l}
\varphi (a_1 \cdots a_n ) =
\sum_{\pi \in \cPchi (n)}  \, \Bigl( \
\prod_{V \in \pi}
\kappa_{\chi \mid V} 
( \, (a_1, \ldots , a_n) \mid V \, ) \ \Bigr),   \\
                                                    \\
\mbox{ for every $n \geq 1$, $\chi \in \{ \ell , r \}^n$
       and $a_1, \ldots , a_n \in \cA$. }
\end{array}   \right.
\end{equation}
These $\kappa_{\chi}$'s will be called the 
{\em $( \ell , r )$-cumulant functionals} of $( \cA , \varphi )$.
\end{prop-and-def}

\begin{proof}  For $n=1$ we define 
$\kappa_{ ( \ell ) } = \kappa_{ (r) } = \varphi$.  We then 
proceed recursively, where for every $n \geq 2$, every 
$\chi \in \{ \ell , r \}^n$ and every $a_1, \ldots , a_n \in \cA$
we put 
\begin{equation}   \label{eqn:52b}
\kappa_{\chi} ( a_1, \ldots , a_n ) 
= \varphi ( a_1 \cdots a_n ) 
- \sum_{  \begin{array}{c} 
           {\scriptstyle \pi \in \cPchi (n)}  \\
           {\scriptstyle \pi \neq 1_n}        \end{array}  } 
\, \Bigl( \ \prod_{V \in \pi}
\kappa_{\chi \mid V} 
( \, (a_1, \ldots , a_n) \mid V \, ) \ \Bigr).
\end{equation}
It is immediate that (\ref{eqn:52b}) defines indeed a family of 
multilinear functionals which fulfil (\ref{eqn:52a}).  The 
uniqueness part of the proposition is also immediate, by 
following the (obligatory) recursion (\ref{eqn:52b}).
\end{proof}

$\ $

$\ $

\begin{remark}    \label{rem:5.3}
Let $( \cA , \varphi )$ be a noncommutative probability space,
let $( \kappa_n )_{n=1}^{\infty}$ be the family of free cumulant
functionals of $( \cA , \varphi )$, and let 
$\bigl( \, \kappa_{\chi} : \cA^n \to \bC \, \bigr)_{ n \geq 1,
\, \chi \in \{ \ell , r \}^n   }$
be the family of $( \ell ,r )$-cumulant functionals introduced
in Definition \ref{prop-and-def:5.2}.

\vspace{6pt}

$1^o$ As noticed in Remark \ref{rem:3.7}.2, one has
$\cP^{ ( \ell, \ldots , \ell )} (n) =
\cP^{ ( r, \ldots , r )} (n) = NC(n)$.  By plugging this fact
into the recursion (\ref{eqn:52b}) which characterizes the 
functionals $\kappa_{\chi}$, one immediately obtains the fact 
(already advertised in the introduction) that 
\[
\kappa_{ ( \, \underbrace{\ell, \ldots , \ell}_n \, ) }
= \kappa_{ ( \, \underbrace{r, \ldots , r}_n \, ) }
= \kappa_n, \ \ \forall \, n \geq 1.
\]

\vspace{6pt}

$2^o$ If $n \leq 3$, then we actually  have 
$\kappa_{\chi} = \kappa_n$ for every $\chi \in \{ \ell , r \}^n$.
This comes from the fact, observed in Remark \ref{rem:3.7}.3,
that $\cP^{ ( \chi ) } (n) = NC(n)$ when $n \leq 3$, no matter 
what $\chi \in \{ \ell , r \}^n$ we consider.

\vspace{6pt}

$3^o$ For $n \geq 4$, the functionals $\kappa_{\chi}$ with 
$\chi \in \{ \ell , r \}^n$ are generally different from 
$\kappa_n$.  Say for instance that 
$\chi = ( \ell , r , \ell , r) \in \{ \ell , r \}^4$, then 
the difference between the lattices 
$NC(4)$ and $\cP^{ ( \chi ) } (4)$ leads to the fact that 
for $a_1, \ldots , a_4 \in \cA$ we have
\[
\kappa_{ ( \ell ,r , \ell , r) } (a_1, \ldots , a_4)
\begin{array}[t]{cl}
=   &  \kappa_4 (a_1, \ldots , a_4)                     \\
    &   + \kappa_2 (a_1, a_4) \kappa_2 (a_2, a_3)
        - \kappa_2 (a_1, a_3) \kappa_2 (a_2, a_4).
\end{array}
\] 
\end{remark}

$\ $

\begin{remark}   \label{rem:5.4}
Let us also record here a formula, concerning 
$( \ell , r )$-cumulants, which is related to the reading of
$( \ell , r )$-words in reverse (i.e. to looking at $\chi$ 
versus $\chiopp$, as in Definition \ref{def:4.12} and Proposition 
\ref{prop:4.13}).  Suppose that $( \cA , \varphi )$ is a 
$*$-probability space.  Then, 

\noindent
with 
$\bigl( \,  \kappa_{\chi} : \cA^n \to \bC \, \bigr)_{ n \geq 1,
\,  \chi \in \{ \ell , r \}^n }$ denoting the family of 
$( \ell, r )$-cumulant functionals of $( \cA , \varphi )$,
one has 
\begin{equation}   \label{eqn:54a}
\left\{   \begin{array}{l}
\kappa_{\chi} ( a_1^{*}, \ldots , a_n^{*} ) 
= \overline{ \kappa_{\chiopp} (a_n, \ldots , a_1) },           \\
                                                               \\
\mbox{ $\ $ for every $n \geq 1$, $\chi \in \{ \ell , r \}^n$
            and $a_1, \ldots , a_n \in \cA$. }                 
\end{array}  \right.
\end{equation}
The verification of (\ref{eqn:54a}) is easily done by induction 
on $n$, where one relies on the bijections
\[ 
\cPchi (n) \ni \pi \mapsto \piopp \in \cPchiopp (n), \ \ 
\mbox{ for $n \geq 1$ and $\chi \in \{ \ell , r \}^n$, }
\]
that were observed in Proposition \ref{prop:4.13}.  (The proof 
of the induction step starts, of course, by writing that 
$\varphi (a_1^{*} \cdots a_n^{*}) =
\overline{ \varphi (a_n \cdots a_1) }$; each of the moments 
$\varphi (a_1^{*} \cdots a_n^{*})$ and $\varphi (a_n \cdots a_1)$ 
is then expanded into $( \ell , r )$-cumulants, in the way 
described in Definition \ref{prop-and-def:5.2}.)
\end{remark}

$\ $

$\ $

\begin{center}
{\bf\large 6.  \boldmath{$( \ell , r )$}-cumulants
of canonical operators}
\end{center}
\setcounter{section}{6}
\setcounter{equation}{0}
\setcounter{theorem}{0}

In this section we prove the theorem announced in section 1.4 of 
the Introduction.  We will adopt the framework and notations of 
the theorem -- so we are dealing with the $d$-tuples
$(A_1, \ldots , A_d)$ and $(B_1, \ldots , B_d)$ of left and 
respectively right canonical operators on $\cT$, which were 
defined in Equations (\ref{eqn:12c})--(\ref{eqn:12f}) of
section 1.2 by starting from two non-commutative polynomials
$f(z_1, \ldots , z_d)$ and $g(z_1, \ldots , z_d)$.  Recall that 
the coefficients of $z_{i_1} \cdots z_{i_n}$ in the polynomials
$f$ and $g$ are denoted as $\alpha_{(i_1, \ldots , i_n)}$ and 
as $\beta_{(i_1, \ldots , i_n)}$, respectively.  

In the formula claimed by the theorem we used the unified notation 
\begin{equation}   \label{eqn:13a}
A_i =: C_{i; \ell} \ \mbox{ and }
B_i =: C_{i; r}, \ \ \mbox{ for } 1 \leq i \leq d.
\end{equation}
In order to give a concise re-statement of that formula, let us 
also introduce a unified notation for the relevant coefficients 
$\alpha$ and $\beta$, as follows.

$\ $

\begin{definition}   \label{def:6.9}
{\em [Bi-words and bi-mixtures of coefficients.] }

\noindent
Let $n$ be a positive integer.

\vspace{6pt}

$1^o$ The elements of the set
$\{ 1, \ldots, d \}^n \times \{ \ell , r \}^n$ 
will be called {\em bi-words} of length $n$.  

\vspace{6pt}

$2^o$ Let $( \omega ; \chi )$ be a bi-word of length $n$, 
where $\omega = (i_1, \ldots , i_n ) \in \{ 1, \ldots , d \}^n$
and $\chi = (h_1, \ldots , h_n) \in \{ \ell , r \}^n$.
We denote  
\begin{equation}   \label{eqn:690a}
\gamma ( \omega ; \chi ) :=
\left\{   \begin{array}{ll}
\alpha_{ ( i_{m_r (v)}, \ldots , i_{m_r (1)},
           i_{m_{\ell} (1)}, \ldots , i_{m_{\ell} (u)} ) }, 
         & \mbox{ if $h_n = \ell$, }                   \\
         &                                             \\
\beta_{ ( i_{m_{\ell} (u)}, \ldots , i_{m_{\ell} (1)},
          i_{m_r (1)}, \ldots , i_{m_r (v)} ) },
         & \mbox{ if $h_n = r$, } 
\end{array}  \right.
\end{equation}
where $m_{\ell} (1) < \cdots < m_{\ell} (u)$ and
$m_r (1) < \cdots < m_r (v)$ record the lists of occurrences of 
$\ell$ and of $r$ in $\chi$ (same convention of notation as in 
Definition \ref{def:4.1}).  We will refer to 
$\gamma ( \omega ; \chi )$ as the {\em bi-mixture} of $\alpha$'s 
and $\beta$'s corresponding to the bi-word $( \omega ; \chi )$.
\end{definition} 

$\ $

The result we want to prove can then be stated as follows.

$\ $

\begin{theorem}   \label{thm:6.5}
For every $n \geq 1$ and every 
$\chi = ( h_1, \ldots , h_n) \in \{ \ell , r \}^n$, 
$\omega = (i_1, \ldots , i_n) \in \{ 1, \ldots , d \}^n$, 
one has
\begin{equation}  \label{eqn:69a}
\kappa_{\chi} ( C_{i_1; h_1}, \ldots , C_{i_n; h_n} )
= \gamma ( \omega ; \chi ).
\end{equation}
\end{theorem} 

$\ $

The remaining part of the section is devoted to the proof
of Theorem \ref{thm:6.5}.  The proof will go by formalizing, 
in Lemma \ref{lemma:6.7} below, the intuitive idea that the 
action of $A_1, \ldots , A_d$, 
$B_1, \ldots , B_d$ on the vacuum vector $\xivac \in \cT$ is 
closely related to the deque-scenarios from Section 3 of the 
paper.

In order to state Lemma \ref{lemma:6.7}, we need the concept 
(related to the one from Definition \ref{def:6.9}.2) of what is 
a ``reverse-bi-mixture'' of coefficients $\alpha$ and $\beta$.

$\ $

\begin{definition}    \label{def:6.6}
Let $n$ be a positive integer and let $( \omega ; \chi )$ be a 
bi-word of length $n$, where $\omega = ( i_1, \ldots , i_n )$ 
and $\chi = ( h_1, \ldots , h_n )$.  We will denote  
\begin{equation}   \label{eqn:66c}
\widetilde{\gamma} ( \omega ; \chi ) :=
\left\{   \begin{array}{ll}
\alpha_{ ( i_{m_r (1)}, \ldots , i_{m_r (v)},
           i_{m_{\ell} (u)}, \ldots , i_{m_{\ell} (1)} ) }, 
         & \mbox{ if $h_1 = \ell$  }                   \\
         &                                             \\
\beta_{ ( i_{m_{\ell} (1)}, \ldots , i_{m_{\ell} (u)},
          i_{m_r (v)}, \ldots , i_{m_r (1)} ) },
         & \mbox{ if $h_1 = r$, } 
\end{array}  \right.
\end{equation}
where $m_{\ell} (1) < \cdots < m_{\ell} (u)$ and
$m_r (1) < \cdots < m_r (v)$ record the lists of occurrences of 
$\ell$ and of $r$ in $\chi$ (same convention of notation as in 
Definition \ref{def:4.1} and in Definition \ref{def:6.9}).
We will refer to $\widetilde{\gamma} ( \omega ; \chi )$ as the
{\em reverse-bi-mixture} of $\alpha$'s and $\beta$'s 
corresponding to the bi-word $( \omega ; \chi )$.
\end{definition}

$\ $

\begin{remark}   \label{rem:6.4}
It is obvious that the reverse-bi-mixtures which were just 
introduced are related to the bi-mixtures from Definition 
\ref{def:6.9} by the formula
\begin{equation}   \label{eqn:690b}
\gamma ( \omega ; \chi ) 
= \widetilde{\gamma} ( \omegaopp ; \chiopp ),
\end{equation}
where for $\chi = ( h_1, \ldots , h_n)$ and
$\omega = (i_1, \ldots , i_n)$ we put
$\chiopp := ( h_n, \ldots , h_1 )$ (same as in 
Definition \ref{def:4.12}) and
$\omegaopp := ( i_n, \ldots , i_1 )$. 

Let us also record an immediate extension of Equation 
(\ref{eqn:690b}), namely that for every non-empty set 
$T \subseteq \{ 1, \ldots , n \}$ we have
\begin{equation}   \label{eqn:690c}
\gamma ( ( \omega ; \chi) \mid T ) 
= \widetilde{\gamma} ( \, ( \omegaopp ; \chiopp ) \mid (n+1)-T \, ),
\end{equation}
with $(n+1) - T := \{ n+1-t \mid t \in T \}$.  

\noindent
[The restrictions of bi-words that have appeared in 
Equation (\ref{eqn:690c}) are defined by the same convention 
as used in Notation \ref{def:5.1} -- e.g. we have
\[
( \omega ; \chi ) \mid T :=
( \omega \mid T \, ; \, \chi \mid T ) \in 
\{ 1, \ldots, d \}^m  \times  \{ \ell , r \}^m,
\]
where $m$ is the number of elements of $T$.]

In the statement of Lemma \ref{lemma:6.7} we will also use 
the following notation.
\end{remark}

$\ $

\begin{notation}    \label{def:6.2}
We denote
\begin{equation}   \label{eqn:62c}
\left\{   \begin{array}{l}
X_{0; \ell} = X_{0;r} = I \ \mbox{ (identity operator); }   \\
                                                            \\
X_{p; \ell} = \sum_{i_1, \ldots , i_p =1}^d
\alpha_{ (i_1, \ldots , i_p) }
L_{i_p} \cdots L_{i_1}, \ \mbox{ for $p \geq 1$;}           \\
                                                            \\
X_{p; r} = \sum_{i_1, \ldots , i_p =1}^d
\beta_{ (i_1, \ldots , i_p) }
R_{i_p} \cdots R_{i_1}, \ \mbox{ for $p \geq 1$.} 
\end{array}   \right.
\end{equation}   
The canonical operators $A_i, B_i$ that we are dealing with 
can then be written as 
\begin{equation}   \label{eqn:62d}
A_i = L_i^{*} \, \sum_{p=0}^{\infty} X_{p; \ell},
\ \ 
B_i = R_i^{*} \, \sum_{p=0}^{\infty} X_{p; r},
\ \mbox{ for $1 \leq i \leq d$.}
\end{equation}
(The sums in (\ref{eqn:62d}) are actually finite, since 
$X_{p; \ell} = X_{p;r} = 0$ for $p$ large enough.)

It will be convenient to use a ``unified left-right notation'' 
of the Equations (\ref{eqn:62d}), as follows.  We already have a
unified notation for $A_i$ and $B_i$ (the $C_{i;h}$ from 
Equation (\ref{eqn:13a})), and let us also denote
\[
L_i =: S_{i, \ell},  \ \ R_i =: S_{i,r}, 
\ \ \mbox{ for $1 \leq i \leq d$.}
\]
Then (\ref{eqn:62d}) can be put in the form
\begin{equation}   \label{eqn:62g}
C_{i;h} = S_{i;h}^{*} \, \sum_{p=0}^{\infty} X_{p; h},
\mbox{ for $1 \leq i \leq d$ and $h \in \{ \ell , r \}$. }
\end{equation}
\end{notation}

$\ $

\begin{lemma}  \label{lemma:6.7}
Let $n$ be a positive integer, and consider the following items:

\vspace{6pt}

$\bullet$ an $n$-tuple 
$\omega = (i_1, \ldots , i_n ) \in \{ 1, \ldots , d \}^n$;

\vspace{6pt}

$\bullet$ an $n$-tuple 
$\chi = (h_1, \ldots , h_n ) \in \{ \ell , r \}^n$;

\vspace{6pt}

$\bullet$ a Lukasiewicz path $\lambda \in \Luk (n)$ with rise-vector 
denoted as $\vec{\lambda} = (p_1 -1, \ldots , p_n - 1)$, where 
$p_1, \ldots , p_n \in \bN \cup \{ 0 \}$. 

\vspace{6pt}

\noindent
Let $\pi \in \cPchi (n)$ be the output-time partition associated
to $( \lambda , \chi )$ in Definition \ref{def-and-rem:3.3}.  Then 
we have
\begin{equation}  \label{eqn:67a}
X_{p_1;h_1}^{*} S_{i_1;h_1} \cdots X_{p_n;h_n}^{*} S_{i_n;h_n} 
\xivac = \overline{c} \, \xivac,  \ \ 
\mbox{ where }  \ \ 
c = \prod_{T \in \pi} \widetilde{\gamma}
( \, ( \omega ; \chi ) \mid T \, ).
\end{equation}
In Equation (\ref{eqn:67a}), the operators $X_{p;h}$ and $S_{i;h}$ 
are as in Notation \ref{def:6.2}, and the coefficients
$\widetilde{\gamma}$ are reverse-bi-mixtures, as in Definition
\ref{def:6.6}.
\end{lemma}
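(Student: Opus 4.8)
The natural approach is induction on $n$, organized around how the vacuum vector gets pushed through the string of operators from right to left. The key structural observation is that each operator $X_{p;h}^{*} S_{i;h}$ acts by first creating (applying $S_{i;h}$, which is a creation operator $L_i$ or $R_i$), and then the $X^{*}$-part annihilates $p$ tensor legs off the appropriate side of whatever vector we have so far, picking up a coefficient $\overline{\alpha}$ or $\overline{\beta}$ in the process. Reading the product $X_{p_1;h_1}^{*} S_{i_1;h_1} \cdots X_{p_n;h_n}^{*} S_{i_n;h_n} \xivac$ from the right, the $n$-th step first puts $e_{i_n}$ on a side of $\xivac$, then immediately strips $p_n$ legs; more generally, the intermediate vectors are elementary tensors whose legs are labelled by balls currently ``in the deque pipe'', so that applying $S_{i_k;h_k}$ inserts ball $\textcircled{k}$ on the left or right of the current tensor, and applying $X_{p_k;h_k}^{*}$ removes the $p_k$ outermost balls from that same side, contributing the factor $\overline{\widetilde{\gamma}(\,(\omega;\chi)\mid T_k\,)}$ when $p_k>0$ (and contributing $1$ when $p_k=0$, i.e. when $k\notin I$). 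This is precisely the deque dynamics of Definition~\ref{rem:3.2}, run in reverse time, and the set $T_k$ of ``output times'' of that group of balls is exactly the block of the output-time partition $\pi$ indexed by $k\in I$.

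First I would set up the bookkeeping: for $0\le k\le n$, let $\xi_k$ be the vector obtained by applying the last $k$ operator-pairs $X_{p_{n-k+1};h_{n-k+1}}^{*} S_{i_{n-k+1};h_{n-k+1}} \cdots X_{p_n;h_n}^{*} S_{i_n;h_n}$ to $\xivac$. I would prove by induction on $k$ that $\xi_k$ is a scalar multiple of an elementary tensor whose legs correspond to the balls residing in the deque pipe at time $n-k$ of the $(\lambda,\chi)$-scenario, with legs ordered left-to-right matching the deque, where leg labelled by ball $\textcircled{j}$ carries the basis vector $e_{i_j}$; and that the scalar is $\overline{c_k}$ where $c_k$ is the product of $\widetilde\gamma(\,(\omega;\chi)\mid T\,)$ over those blocks $T$ of $\pi$ whose minimum is $> n-k$ — equivalently, over the groups of balls that have already been ``processed'' (inserted and then fully output). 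The inductive step from $k$ to $k+1$ applies $X_{p_{n-k};h_{n-k}}^{*} S_{i_{n-k};h_{n-k}}$: the creation operator prepends/appends $e_{i_{n-k}}$ on the $h_{n-k}$-side, and then $X_{p_{n-k};h_{n-k}}^{*}$ annihilates the $p_{n-k}$ outermost legs on that side, which by the inductive description are exactly the legs of the last inserted group together with possibly-earlier legs — matching the left-$p$ (or right-$p$) move of Definition~\ref{def-and-rem:3.1} — and produces the conjugate of the reverse-bi-mixture coefficient for the block $T_{n-k}$, using the precise ordering conventions in (\ref{eqn:62c}) and (\ref{eqn:66c}). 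At $k=n$ all balls have been output, the tensor is back to $\xivac$, and $c_n = \prod_{T\in\pi}\widetilde\gamma(\,(\omega;\chi)\mid T\,) = c$, which is the assertion.

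The main obstacle — and the step I would spend the most care on — is matching the indexing conventions exactly: showing that when $X_{p;h}^{*}$ strips $p$ legs, the product of creation-coefficients that survives (after using $L_i^{*}L_j=\delta_{ij}$, resp.\ $R_i^{*}R_j=\delta_{ij}$, and $L_i^{*}R_j$-type vanishing — which is why left and right legs never get confused) is precisely $\overline{\alpha}$ or $\overline{\beta}$ with its indices read in the order dictated by (\ref{eqn:66c}), i.e.\ with the reversal built into $X_{p;\ell}=\sum \alpha_{(i_1,\dots,i_p)}L_{i_p}\cdots L_{i_1}$ and the left/right-standing interleaving of the block $T$. Here one must be careful that a single block $T_k$ of $\pi$ may contain \emph{both} left-times and right-times (balls that entered by one side at the insertion move but left by the other), and that the coefficient attached is determined by $h_1$ of the restricted tuple $\chi\mid T_k$ — i.e.\ by whether ball $\textcircled{k}$ itself, the one whose output time equals the insertion time, exits on the left or the right; this is exactly the dichotomy recorded in (\ref{eqn:66c}). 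A clean way to handle this is to isolate it as the base-case bookkeeping: analyze directly a single-group scenario (one insertion of $p$ balls followed by $p$ output moves, the rest of the deque empty), verify (\ref{eqn:67a}) there by unwinding the definitions of $X_{p;h}$ and the Cuntz relations, and then feed that computation into the inductive step above as the ``local'' contribution of block $T_{n-k}$.

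Once Lemma~\ref{lemma:6.7} is in hand, the proof of Theorem~\ref{thm:6.5} itself will follow by pairing: $\phivac(C_{i_1;h_1}\cdots C_{i_n;h_n}) = \langle C_{i_1;h_1}\cdots C_{i_n;h_n}\,\xivac,\,\xivac\rangle$, expand each $C_{i_k;h_k}=S_{i_k;h_k}^{*}\sum_{p\ge0}X_{p;h_k}$ — wait, one must be mildly careful that the operator is $S^{*}$ times $\sum X$, not $X^{*}S$; but taking adjoints turns the bra--ket into exactly the form of (\ref{eqn:67a}) read on the conjugate side — then use that only Lukasiewicz-path choices of $(p_1,\dots,p_n)$ give a nonzero contribution (by the balance counting of (\ref{eqn:32b})), so the moment becomes $\sum_{\lambda\in\Luk(n)}\prod_{T\in\Phi_\chi(\lambda)}\widetilde\gamma(\,(\omega;\chi)\mid T\,)$, i.e.\ a sum over $\cPchi(n)$ of products of $\widetilde\gamma$'s. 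Comparing with the defining recursion (\ref{eqn:52b}) of the $(\ell,r)$-cumulants and inducting on $n$ forces $\kappa_\chi(C_{i_1;h_1},\dots,C_{i_n;h_n})$ to equal the single block contribution $\widetilde\gamma(\omega;\chi)$ for the full block $1_n$ — and a final check of the index bookkeeping (the reversal in (\ref{eqn:66c}) versus (\ref{eqn:65a}), noting that (\ref{eqn:66c}) keys off $h_1$ while (\ref{eqn:65a}) keys off $h_n$, which is consistent because the cumulant reads the operator string in the natural order whereas the Lukasiewicz/output-time picture reverses it) yields precisely formula (\ref{eqn:65a}). The only genuinely delicate point in this last part is confirming that the moment-to-$(\ell,r)$-cumulant inversion isolates the \emph{full} block term with the right orientation; everything else is substitution.
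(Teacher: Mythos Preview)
Your proposal is correct in substance and would yield a valid proof, but it is organized differently from the paper's argument, and there is one point of imprecision worth flagging.

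\textbf{Difference in organization.} The paper inducts not on the number $k$ of operator-pairs applied, but on the number $t$ of blocks of the output-time partition (equivalently, the number of insertion times). At the inductive step it peels off the block $T_k$ indexed by the \emph{maximal} insertion time $k=j_t$: since $p_{k+1}=\cdots=p_n=0$, the tail $X_{p_k;h_k}^{*}S_{i_k;h_k}\cdots S_{i_n;h_n}\xivac$ can be computed in closed form (all the $X^{*}$'s after position $k$ are the identity), producing $\overline{\widetilde\gamma((\omega;\chi)\mid T_k)}$ times a pure tensor, and the remaining product is identified with a strictly shorter instance of the lemma on $n-p_k$ letters. Your approach instead tracks the intermediate vector $\xi_k$ one operator-pair at a time, maintaining the invariant that $\xi_k$ is a scalar multiple of an elementary tensor encoding the deque contents. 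Both routes ultimately rely on the same local computation --- that annihilating the $p_m$ outermost legs on side $h_m$ produces exactly $\overline{\widetilde\gamma((\omega;\chi)\mid T_m)}$ --- but the paper's block-by-block reduction makes that computation cleaner, because at the moment the block $T_k$ is processed there are no further insertions to worry about, so the tail is just a string of creation operators. Your finer step-by-step version must instead verify the coefficient identity at a general insertion time $m$, where later insertions have already been applied and undone; this works, but the ``threshold'' argument (that among the $p_m$ balls inserted at time $m$, those exiting on the left form a contiguous initial segment and those exiting on the right a contiguous final segment, with the corresponding output-times being respectively the smallest $\ell$-times and the largest $r$-times in $T_m$) has to be made at that greater level of generality.

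\textbf{A labeling slip.} Your invariant says the leg for ball $\textcircled{j}$ carries $e_{i_j}$, where $\textcircled{j}$ is a ball in the deque of the $(\lambda,\chi)$-scenario. In the paper's scenario balls are labelled by \emph{input position}, and with that convention the claim is false (e.g.\ in Example~\ref{example:6.11}, at $k=1$ the deque holds ball $\textcircled{4}$ but $\xi_1=e_{i_5}$). What is true is that the leg carries $e_{i_t}$ where $t$ is the \emph{output time} of the ball; equivalently, your process is the reverse scenario with balls relabelled by output time rather than input position. Once you make that relabelling explicit, your invariant and the rest of the argument go through as written.
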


\begin{proof}  

Let $\{j_1<j_2<\ldots< j_t\}=\{i | p_i>0\}$ let 
$\pi=\{T_{j_1},\ldots, T_{j_t}\}$, where for $r=1,\ldots, t$, 
we have that $T_{j_r}$ denotes the block of the output-time 
partition corresponding to time $j_r$, i.e., the block whose 
minimal element is $j_r$.  We abbreviate $k=j_t$.  

We proceed by induction on $t$.  We first deal with the base case.  If $t=1$ then we must have $k=1$, $p_1=p_k=n$, $p_2=\ldots=p_n=0$, and $\pi=\{T_1\} = \left\{ \{1,\ldots,n\}\right\}$.  If we denote $\{m_{\ell}(1)<\ldots m_{\ell}(u)\} = \{i| h_i=\ell\}$ and $\{m_r(1)<\ldots<m_r(v)\}=\{i|h_i=r\}$ 
as in Definition \ref{def:6.6}, then we have
\begin{eqnarray*}
&\phantom{=}& \hskip -3em X_{p_1;h_1}^{*} S_{i_1;h_1} \cdots X_{p_n;h_n}^{*} S_{i_n;h_n} 
\xivac\\ 
&=& X_{n;h_1}^{*}\left( S_{i_1; h_1}\cdots S_{i_n;h_n}\xivac\right)\\
&=& X_{n;h_1}^{*} e_{m_{\ell}(1)}\otimes\ldots\otimes e_{m_{\ell}(u)}\otimes e_{m_r(v)}\otimes\ldots\otimes e_{m_r(1)} \\
&=& \overline{\widetilde{\gamma}\left((\omega,\chi)|T_1\right)} \xivac \\
&=& \overline{c}\xivac.
\end{eqnarray*}

Now assume that $t>1$ and that the conclusion of the lemma 
holds for all smaller values of $t$. Let 
$$f\colon\{1,2,\ldots, n-p_k\}\to\{1,\ldots, n\}\setminus T_k$$
denote the unique increasing bijection.  We abbreviate
\[
\widehat{\omega} = \left(i_{f(1)},\ldots, i_{f(n-p_k)}\right), 
\ \ 
\widehat{\chi} = \left(h_{f(1)},\ldots, h_{f(n-p_k)}\right),
\]
and we also denote 
$\vec{v} := \bigl( p_{f(1)}-1,\ldots, p_{f(n-p_k)-1} \bigr)$.
Let $\widehat{\lambda}$ be the Lukasiewicz path associated to 
$\vec{v}$, and let 
$\widehat{\pi}\in\mathcal{P}^{(\widehat{\chi})}(n-p_k)$ be the 
output-time partition associated to 
$(\widehat{\lambda},\widehat{\chi})$.  We now note, as is implicit 
in the discussions in Sections 3 and 4, that 
$$  f(\widehat{\pi}) = \{T_1,\ldots, T_{j_{t-1}}\}.  $$
Details of this observation are left to the reader.  Observe now 
that by the induction hypothesis we have
$$
X_{p_{f(1)},h_{f(1)}}^{*}S_{i_{f(1)};h_{f(1)}}\cdots X_{p_{f(n-p_k)},h_{f(n-p_k)}}^{*}S_{i_{f(n-p_k)};h_{f(n-p_k)}} =\overline{\widehat{c}}\xivac,
$$
where 
$$\widehat{c}= \prod_{T \in \widehat{\pi}} \widetilde{\gamma}
( \, ( \widehat{\omega} ; \widehat{\chi} ) \mid T \, )
 = \prod_{T\in\pi, T\not=T_k} \widetilde{\gamma}
( \, ( \omega ; \chi ) \mid T \, )
.$$
Let us list elements of the set $\{k,\ldots, n\}$ as $d_{k},\ldots, d_{n}$ by listing left elements first in the increasing order followed by the right elements in the decreasing order, i.e., the order in the list $d_{k},\ldots, d_n$ respects the order from the list $m_{\ell}(1),\ldots, m_{\ell}(u),m_r(v),\ldots, m_r(1)$.  Now note that we have
\begin{eqnarray*}
&\phantom{=}& \hskip -3em X_{p_k;h_k}^*S_{i_{k};h_k}\cdots X_{p_n;h_n}^*S_{i_n;h_n}\xivac\\
&=& X_{p_k;h_k}^* \left(S_{i_k;h_k}\cdots S_{i_n;h_n}\xivac\right) \\
&=& X_{p_k;h_k}^* e_{d_k}\otimes \ldots\otimes e_{d_n} \\
&=&\begin{cases} \overline{\alpha_{d_n,\ldots, d_k}}\xivac &,\mbox{ if }p_k=n-k+1\mbox{ and }h_k=\ell \\
\overline{\beta_{d_k,\ldots, d_n}}\xivac &,\mbox{ if }p_k=n-k+1\mbox{ and }h_k=r \\
 \overline{\alpha_{d_{k+p_k-1},\ldots, d_{k}}}e_{d_{k+p_k}}\otimes\ldots\otimes e_{d_{n}} &,\mbox{ if }p_k<n-k+1\mbox{ and }h_k=\ell\\
\overline{\beta_{d_{n+1-p_k},\ldots, d_n}}e_{d_{k}}\otimes\ldots\otimes e_{d_{n-p_k}} &,\mbox{ if }p_k<n-k+1\mbox{ and }h_k=r 
\end{cases}\\
&=& \overline{\widetilde{\gamma}\left((\omega,\chi)|T_k\right)} S_{i_{f(k)};d_{f(k)}}\cdots 
S_{i_{f(n-p_k)};d_{f(n-p_k)}}\xivac.
\end{eqnarray*}
Hence we have
\begin{eqnarray*}
&\phantom{=}& \hskip -3em X_{p_1;h_1}^*S_{i_1;h_1}\cdots  X_{p_{k-1};h_{k-1}}^*S_{i_{k-1};h_{k-1}}X_{p_k;h_k}^*S_{i_{k};h_k}\cdots X_{p_n;h_n}^*S_{i_n;h_n}\xivac \\
&=& X_{p_1;h_1}^*S_{i_1;h_1}\cdots  X_{p_{k-1};h_{k-1}}^*\left(\overline{\widetilde{\gamma}}\left((\omega,\chi)|T_k\right) S_{i_{f(k)};d_{f(k)}}\cdots 
S_{i_{f(n-p_k)};d_{f(n-p_k)}}\xivac\right) \\
&=& \overline{\widetilde{\gamma}\left((\omega,\chi)|T_k\right)} X_{p_1;h_1}^*S_{i_1;h_1}\cdots  X_{p_{k-1};h_{k-1}}^* S_{i_{f(k)};d_{f(k)}}\cdots 
S_{i_{f(n-p_k)};d_{f(n-p_k)}}\xivac \\
&=& \overline{\widetilde{\gamma}\left((\omega,\chi)|T_k\right)} X_{p_{f(1)};h_{f(1)}}^*S_{i_{f(1)};h_{f(1)}}\cdots X_{p_{f(n-p_k)};h_{f(n-p_k)}}^*S_{i_{f(n-p_k)};h_{f(n-p_k)}} \\
&=& \overline{\widetilde{\gamma}\left((\omega,\chi)|T_k\right)}\cdot \prod_{T\in\pi, T\not=T_k} \overline{\widetilde{\gamma}
( \, ( \omega ; \chi ) \mid T \, )}\xivac\\
&=&  \prod_{T\in\pi} \overline{\widetilde{\gamma}
( \, ( \omega ; \chi ) \mid T \,)} \xivac = \overline{c}\xivac.
\end{eqnarray*}
This concludes the induction step.
\end{proof}

$\ $

\begin{example}   \label{example:6.11}  
For clarity, let us follow the preceding lemma in the concrete 
case (also discussed earlier, in Examples \ref{example:3.4}.1 
and \ref{example:4.11}) where $n=5$, 
$\chi = ( r , \ell , \ell , r, \ell )$, and 
$\lambda \in \Luk (5)$ has rise-vector 
$\vec{\lambda} = ( 2, -1, 1, -1, -1 )$.  As found in Example 
\ref{example:3.4}.1, the output-time partition associated to 
this $( \lambda , \chi )$ is 
$\pi = \{ \, \{ 1,2,4 \} , \, \{ 3,5 \} \, \}$.  Let us also fix 
a tuple $\omega = (i_1, \ldots , i_5) \in \{ 1, \ldots , d \}^5$.
We have
$\widetilde{\gamma} 
\bigl( \, ( \omega ; \chi ) \mid \{ 1,2,4 \} \, \bigr)
= \widetilde{\gamma} 
\bigl( \, (i_1, i_2, i_4) ; (r , \ell , r ) \, \bigr)
= \beta_{(i_2, i_4, i_1)}$ and

\noindent
$\widetilde{\gamma} 
\bigl( \, ( \omega ; \chi ) \mid \{ 3,5 \} \, \bigr)
= \widetilde{\gamma} 
\bigl( \, (i_3, i_5) ; ( \ell , \ell ) \, \bigr)
= \alpha_{ (i_5, i_3) }$.
The constant $c$ from Equation (\ref{eqn:67a}) is thus 
$c = \beta_{(i_2, i_4, i_1)} \, \alpha_{ (i_5, i_3) }$,
and the formula claimed by the lemma should come to
\[
X_{3; r}^{*}    R_{i_1} X_{0; \ell}^{*} L_{i_2} 
X_{2; \ell}^{*} L_{i_3} X_{0; r}^{*}    R_{i_4}
X_{0; \ell}^{*} L_{i_5} \xivac = \overline{c} \, \xivac 
\]
for this particular value of $c$.  And indeed, let us record
how $\xivac$ travels when we apply to it the operators listed 
on the left-hand side of the above equation: we get
\begin{eqnarray*}
\xivac 
& \mapsto & L_{i_5}  \xivac = e_{i_5}                            \\
& \mapsto & R_{i_4} e_{i_5} = e_{i_5} \otimes e_{i_4}            \\
& \mapsto & X_{2; \ell}^{*} L_{i_3} (e_{i_5}\otimes e_{i_4}) 
  = X_{2;\ell}^{*} \, e_{i_3} \otimes e_{i_5} \otimes e_{i_4}  
  = \overline{ \alpha_{i_5, i_3} } \, e_{i_4}                    \\
& \mapsto & L_{i_2}( \overline{ \alpha_{i_5, i_3} } \, e_{i_4} ) 
  = \overline{ \alpha_{i_5, i_3} } \ e_{i_2} \otimes e_{i_4}     \\
& \mapsto & X_{3;r}^{*} R_{i_1}
  ( \overline{ \alpha_{i_5, i_3} } \, e_{i_2} \otimes e_{i_4}) 
  = \overline{ \alpha_{i_5, i_3} } \, X_{3;r}^{*} \, 
               (e_{i_2} \otimes e_{i_4} \otimes e_{i_1})
  = \overline{ \alpha_{i_5, i_3} } \cdot
    \overline{ \beta_{i_2,i_4,i_1} } \, \xivac ,
\end{eqnarray*}
as claimed.
\end{example}

$\ $

\begin{proposition}   \label{prop:6.10}
Let $n$ be a positive integer and let $( \omega ; \chi )$ be a 
bi-word of length $n$, where $\omega = ( i_1, \ldots , i_n )$ 
and $\chi = ( h_1, \ldots , h_n )$.  We have
\begin{equation}   \label{eqn:610a}
\phivac ( C_{i_1; h_1} \cdots C_{i_n; h_n} )
= \sum_{\pi \in \cPchi (n)}  \, \Bigl( \,
\prod_{T \in \pi}
\gamma ( \, ( \omega ; \chi ) \mid T \, ) \, \Bigr) .
\end{equation}
where the bi-mixtures ``$\gamma$'' on the right-hand side 
of the equation are as introduced in Definition \ref{def:6.9}.
\end{proposition}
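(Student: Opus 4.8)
The plan is to pass to adjoints and reduce everything to Lemma \ref{lemma:6.7}. Since $\phivac$ is a vector state, $\phivac ( C_{i_1; h_1} \cdots C_{i_n; h_n} ) = \overline{ \phivac ( C_{i_n; h_n}^{*} \cdots C_{i_1; h_1}^{*} ) }$; and from Equation (\ref{eqn:62g}) (together with the fact that the sums involved are finite, as $f$ and $g$ are polynomials) each factor expands as $C_{i; h}^{*} = \sum_{p \geq 0} X_{p; h}^{*} S_{i; h}$. Substituting this and passing the vacuum vector through, one gets
\[
\phivac ( C_{i_n; h_n}^{*} \cdots C_{i_1; h_1}^{*} )
= \sum_{ p_1, \ldots , p_n \geq 0 }
\langle \, \xivac , \, X_{p_n; h_n}^{*} S_{i_n; h_n} \cdots
X_{p_1; h_1}^{*} S_{i_1; h_1} \, \xivac \, \rangle ,
\]
a sum with only finitely many nonzero terms.

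First I would determine which tuples $(p_1, \ldots , p_n)$ can contribute. Tracking the tensor-degree (number of tensor factors) of the vector as the operators above act on $\xivac$ from the right --- each $S_{i; h}$ raises the degree by $1$, while each $X_{p; h}^{*}$ lowers it by $p$, or kills the vector when the degree is less than $p$ --- one sees, by the same kind of book-keeping that underlies the deque discussion of Section 3, that the corresponding summand vanishes unless the vector $( p_n - 1 , p_{n-1} - 1, \ldots , p_1 - 1 )$ satisfies the conditions (\ref{eqn:22c}), that is, is the rise-vector of a (necessarily unique) path $\lambda ' \in \Luk (n)$. Conversely, $(p_1, \ldots , p_n) \mapsto \lambda '$ is a bijection from the set of such tuples onto $\Luk (n)$.

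For each such tuple I would then invoke Lemma \ref{lemma:6.7} applied to the opposite bi-word $( \omegaopp ; \chiopp )$ (with $\omegaopp = (i_n, \ldots , i_1)$ and $\chiopp = (h_n, \ldots , h_1)$) and to the path $\lambda '$ whose rise-vector is $( p_n - 1, \ldots , p_1 - 1 )$: the left-hand side of Equation (\ref{eqn:67a}) is then exactly the operator product above, so the summand equals $\overline{ \prod_{ T \in \Phi_{\chiopp} ( \lambda ' ) } \widetilde{\gamma} ( \, ( \omegaopp ; \chiopp ) \mid T \, ) }$, where $\Phi_{\chiopp} ( \lambda ' ) \in \cPchiopp (n)$ is the output-time partition of $( \lambda ' , \chiopp )$. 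Summing over the contributing tuples, equivalently over $\lambda ' \in \Luk (n)$, and taking the overall complex conjugate, I obtain
\[
\phivac ( C_{i_1; h_1} \cdots C_{i_n; h_n} )
= \sum_{ \lambda ' \in \Luk (n) } \ \prod_{ T \in \Phi_{\chiopp} ( \lambda ' ) }
\widetilde{\gamma} ( \, ( \omegaopp ; \chiopp ) \mid T \, ) .
\]
It remains to reindex. By Proposition \ref{prop:3.6}, $\Phi_{\chiopp}$ is a bijection from $\Luk (n)$ onto $\cPchiopp (n)$, which turns the sum into $\sum_{ \rho \in \cPchiopp (n) } \prod_{ T \in \rho } \widetilde{\gamma} ( ( \omegaopp ; \chiopp ) \mid T )$. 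By Proposition \ref{prop:4.13} every $\rho \in \cPchiopp (n)$ is uniquely of the form $\piopp$ with $\pi \in \cPchi (n)$, and the blocks of $\piopp$ are precisely the sets $(n+1) - T$ with $T$ a block of $\pi$; so the sum equals $\sum_{ \pi \in \cPchi (n) } \prod_{ T \in \pi } \widetilde{\gamma} ( ( \omegaopp ; \chiopp ) \mid ( (n+1) - T ) )$. Finally Equation (\ref{eqn:690c}) gives $\widetilde{\gamma} ( ( \omegaopp ; \chiopp ) \mid ( (n+1) - T ) ) = \gamma ( ( \omega ; \chi ) \mid T )$, which produces exactly the right-hand side of (\ref{eqn:610a}).

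The mathematically substantial input is Lemma \ref{lemma:6.7}, which is already in hand; the real hazard in carrying out the above is the reversal book-keeping. One must check that conjugating the moment replaces $( \omega ; \chi )$ by $( \omegaopp ; \chiopp )$, that the path entering Lemma \ref{lemma:6.7} has rise-vector the reverse of $( p_1 - 1, \ldots , p_n - 1 )$, and that the opposite-partition map of Proposition \ref{prop:4.13} combined with Equation (\ref{eqn:690c}) cancels every one of these reversals, so that both the complex conjugation and the ``opp'' decorations disappear from the final answer. The degree-counting argument showing that the non-Lukasiewicz tuples contribute nothing is routine and would be recorded only briefly.
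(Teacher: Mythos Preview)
Your proposal is correct and follows essentially the same route as the paper's proof: both expand the product using Equation~(\ref{eqn:62g}), identify the surviving terms as those indexed by $\Luk(n)$ via the degree-counting argument, invoke Lemma~\ref{lemma:6.7} for the opposite bi-word $(\omegaopp;\chiopp)$, and then reindex using Equation~(\ref{eqn:690c}) and Proposition~\ref{prop:4.13}. The only cosmetic difference is that you take the adjoint at the level of the moment (writing $\phivac(T)=\overline{\phivac(T^{*})}$), which introduces an explicit complex conjugation to be cancelled at the end, whereas the paper takes the adjoint inside the inner product ($\langle T\xivac,\xivac\rangle=\langle\xivac,T^{*}\xivac\rangle$); both amount to the same reversal bookkeeping you flagged.
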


\begin{proof}
Write each of $C_{i_1; h_1}, \ldots , C_{i_n; h_n}$ as a sum in 
the way indicated in Equation (\ref{eqn:62g}) of Notation
\ref{def:6.2}, then expand the ensuing product of sums; we get
\begin{equation}   \label{eqn:610b}
\phivac ( C_{i_1; h_1} \cdots C_{i_n; h_n} )
= \sum_{p_1, \ldots , p_n = 0}^{\infty}
\term_{ ( p_1, \ldots , p_n ) },
\end{equation}
where for every $p_1, \ldots , p_n \in \bN \cup \{ 0 \}$ 
we put
\begin{equation}   \label{eqn:610c}
\term_{ ( p_1, \ldots , p_n ) } 
\begin{array}[t]{rl}
:=  &  \phivac ( S_{i_1;h_1}^{*} X_{p_1;h_1} \cdots 
                 S_{i_n;h_n}^{*} X_{p_n;h_n} )            \\
    &                                                     \\
=   &  \langle  S_{i_1;h_1}^{*} X_{p_1;h_1} \cdots 
      S_{i_n;h_n}^{*} X_{p_n;h_n} \, \xivac \, , \, \xivac \rangle .
\end{array}
\end{equation}
We will proceed by examining what $n$-tuples
$( p_1, \ldots , p_n) \in ( \bN \cup \{ 0 \} )^n$ may contribute
a non-zero term in the sum from (\ref{eqn:610b}).  

\vspace{6pt}

So let $p_1, \ldots , p_n$ be in $\bN \cup \{ 0 \}$.  We make 
the following observations.

\noindent
$\bullet$  If there exists $m \in \{ 1, \ldots , n \}$ with 
$p_m + \cdots + p_n < (n+1) - m$, then 
$\term_{ ( p_1, \ldots , p_n ) } = 0$.  Indeed, if such an $m$
exists then it is immediately seen that 
\[
S_{i_m;h_m}^{*} X_{p_m;h_m} \cdots 
      S_{i_n;h_n}^{*} X_{p_n;h_n} \, \xivac  = 0,
\]
which makes the inner product from (\ref{eqn:610c}) vanish.

\noindent
$\bullet$  If $p_1 + \cdots + p_n > n$, then 
$\term_{ ( p_1, \ldots , p_n ) } = 0$.  Indeed, in this case 
the vector 

\noindent
$S_{i_1;h_1}^{*} X_{p_1;h_1} \cdots 
S_{i_n;h_n}^{*} X_{p_n;h_n} \, \xivac$
is seen to belong to the subspace 
\[
\mbox{span} \{ e_{j_1} \otimes \cdots \otimes e_{j_q} 
\mid 1 \leq j_1, \ldots , j_q \leq d \} \subseteq \cT,
\]
where $q = (p_1 + \cdots + p_n) - n > 0$.  The latter subspace is 
orthogonal to $\xivac$, and this again 
makes the inner product from (\ref{eqn:610c}) vanish.

\vspace{6pt}

The observations made in the preceding paragraph show that a 
necessary condition for $\term_{ ( p_1, \ldots , p_n ) } \neq 0$
is that 
\[
\left\{  \begin{array}{l}
p_m + \cdots + p_n \geq (n+1)-m, \ \ \forall \, 1 \leq m \leq n,  \\
                                                                  \\
\mbox{where for $m=1$ we must have } p_1 + \cdots + p_n = n.
\end{array}  \right.
\]
This says precisely that the tuple 
$( p_n - 1, \ldots , p_1 - 1 )$ is the rise-vector of a 
uniquely determined path $\lambda \in \Luk (n)$.  Hence the 
sum on the right-hand side of (\ref{eqn:610b}) is in fact, in 
a natural way, indexed by $\Luk (n)$.

\vspace{6pt}

Now let us fix a path $\lambda \in \Luk (n)$, where (consistent 
to the above) we denote the rise-vector of $\lambda$ as 
$\vec{\lambda} := ( p_n - 1, \ldots , p_1 - 1 )$.  If we put 
\[
\widetilde{p}_m := p_{n+1-m}, \
\widetilde{h}_m := h_{n+1-m}, \
\widetilde{i}_m := i_{n+1-m}, \ \ 1 \leq m \leq n,
\]
then Equation (\ref{eqn:610c}) can be re-written in the form
\[
\term_{ ( p_1, \ldots , p_n ) } 
= \langle  \xivac \, , \,
X_{\widetilde{p}_1; \widetilde{h}_1}^{*} 
S_{\widetilde{i}_1; \widetilde{h}_1} \cdots 
X_{\widetilde{p}_n; \widetilde{h}_n}^{*} 
S_{\widetilde{i}_n; \widetilde{h}_n} \, \xivac \, \rangle,
\]
where on the right-hand side we are in the position to invoke 
Lemma \ref{lemma:6.7}.  The lemma must be 
used in connection to the path $\lambda$ and the tuples 
$\chiopp = ( \widetilde{h}_1, \ldots , \widetilde{h}_n )$,
$\omegaopp = ( \widetilde{i}_1, \ldots , \widetilde{i}_n )$.
If we also denote 
\[
\widetilde{\pi} := \Phi_{\chiopp} ( \lambda ) 
\ \ \mbox{ (output-time partition associated to $\lambda$ 
and $\chiopp$), }
\]
the application of Lemma \ref{lemma:6.7} takes us to:
\[
\term_{ ( p_1, \ldots , p_n ) } = 
\prod_{ \widetilde{T} \in \widetilde{\pi} } \ 
\widetilde{\gamma} 
( \, ( \omegaopp ; \chiopp ) \mid \widetilde{T} ).
\]
Finally, we note that when $\widetilde{T}$ runs among the 
blocks of $\widetilde{\pi}$, the set 
$(n+1)- \widetilde{T}$ runs among the blocks of the opposite
partition $\widetilde{\pi}_{\mathrm{opp}}$.  Thus, in view of 
the relation between $\gamma$'s and $\widetilde{\gamma}$'s 
observed in Remark \ref{rem:6.4}, we arrive to the formula
\[
\term_{ ( p_1, \ldots , p_n ) } = 
\prod_{ T \in 
        ( \, \Phi_{\chiopp} ( \lambda ) \, )_{\mathrm{opp}} } 
\ \gamma ( \, ( \omega ; \chi ) \mid T \, ).
\]

The overall conclusion of the above discussion is that we have
\[
\phivac ( C_{i_1; h_1} \cdots C_{i_n; h_n} )
= \sum_{\lambda \in \Luk (n)} \
\prod_{ T \in 
        ( \, \Phi_{\chiopp} ( \lambda ) \, )_{\mathrm{opp}} } 
\gamma ( \, ( \omega ; \chi ) \mid T \, ).
\]
The only thing left to verify is, then, that the set of partitions
\[
\bigl\{ \, 
\bigl( \, \Phi_{\chiopp} ( \lambda ) \, \bigr)_{\mathrm{opp}}
\mid \lambda \in \Luk (n) \bigr\}
\]
coincides with $\cPchi (n)$.  But this is indeed true, since
$\{ \Phi_{\chiopp} ( \lambda ) \mid \lambda \in \Luk (n) \}
= \cPchiopp (n)$ (by the definition of $\cPchiopp (n)$), and in view 
of Proposition \ref{prop:4.13}.
\end{proof}

$\ $

\begin{ad-hoc-item}
{\bf Proof of Theorem \ref{thm:6.5}. }
We verify the required formula (\ref{eqn:69a}) 
by induction on $n$. 

For $n=1$ we only have to observe that
$\kappa_{ ( \ell ) } (A_i) = \gamma ( \, (i) ; ( \ell ) \, ), 
\ \ \forall \, 1 \leq i \leq d$
(both the above quantities are equal to $\alpha_{ (i) }$), and 
that
$\kappa_{ ( r ) } (B_i) = \gamma ( \, (i) ; ( r ) \, ), 
\ \ \forall \, 1 \leq i \leq d$
(both quantities equal to $\beta_{ (i) }$). 

Induction step: consider an $n \geq 2$, suppose the 
equality in (\ref{eqn:69a}) has already been verified for 
all bi-words of length $\leq n-1$, and let us fix a bi-word
$( \omega ; \chi )$ of length $n$, for which we want
to verify it as well.  Write explicitly
$\omega = (i_1, \ldots , i_n)$ and $\chi = (h_1, \ldots , h_n)$,
with $1 \leq i_1, \ldots , i_n \leq d$ and 
$h_1, \ldots , h_n \in \{ \ell , r \}$.  The joint moment
$\phivac ( C_{i_1;h_1} \cdots C_{i_n;h_n} )$
can be expressed as a sum over $\cPchi (n)$ in two ways: on 
the one hand we have it written as in Equation (\ref{eqn:610a}) 
of Proposition \ref{prop:6.10}, 
and on the other hand we can write it by using the 
moment$\leftrightarrow$cumulant formula (\ref{eqn:52a}) 
which was used to introduce the $( \ell , r )$-cumulants
in Definition \ref{prop-and-def:5.2}:  
\begin{equation}  \label{eqn:69b}
\phivac ( C_{i_1; h_1} \cdots C_{i_n; h_n} )
= \sum_{\pi \in \cPchi (n)}  \, \Bigl( \,
\prod_{V \in \pi}   \kappa_{\chi \mid V} 
( \, ( C_{i_1; h_1}, \ldots , C_{i_n; h_n} ) \mid V \, ) \ \Bigr) .
\end{equation}  
The induction hypothesis immediately gives us that, for every 
$\pi \neq 1_n$ in $\cPchi (n)$, the term indexed by $\pi$ 
in the two summations that were just mentioned (right-hand side
of (\ref{eqn:610a}) and right-hand side of (\ref{eqn:69b})) are 
equal to each other.  When we equate these two summations and 
cancel all the terms indexed by $\pi \neq 1_n$ in $\cPchi (n)$, 
we are left precisely with 
$\kappa_{\chi} ( C_{i_1; h_1}, \ldots , C_{i_n; h_n} )
= \gamma ( \omega ; \chi )$,
as required.
\hfill $\blacksquare$
\end{ad-hoc-item}

$\ $

$\ $

\noindent
{\bf\Large  Acknowledgements}

\vspace{4pt}

\noindent
This research work was started while the authors were participating
in the focus program on free probability at the Fields Institute in
Toronto, in July 2013.  The uplifting atmosphere and the support of
the Fields focus program are gratefully acknowledged.

\vspace{4pt}

\noindent
We also express our thanks to the anonymous referee who pointed 
to us the importance of re-writing the introduction in a way 
which better shows the motivation of the paper.

$\ $

$\ $

$\ $

$\ $

Mitja Mastnak

Department of Mathematics and Computing Science,

Saint Mary's University,

Halifax, Nova Scotia B3H 3C3, Canada.

Email: mmastnak@cs.smu.ca

$\ $

$\ $

Alexandru Nica

Department of Pure Mathematics, 

University of Waterloo,

Waterloo, Ontario N2L 3G1, Canada.

Email: anica@uwaterloo.ca

\end{document}